\documentclass[11pt,letterpaper]{amsart}

\usepackage{amssymb,amsmath,amsthm,amstext,amscd,latexsym,graphics,graphicx,bbm,caption, mathtools}
\usepackage{relsize}
\usepackage[usenames,dvipsnames,svgnames,table]{xcolor}
\usepackage{hyperref}
\usepackage{tikz}
\usetikzlibrary{positioning}
\usepackage{tikz-cd}
\tikzset{>=stealth}
\hypersetup{plainpages=false,colorlinks=true, pagebackref}
\hypersetup{citecolor=Sepia,linkcolor=blue, urlcolor=blue}

\newtheorem{intro-thm}{Theorem}[]
\theoremstyle{plain}
\newtheorem{thm}{Theorem}[section]
\newtheorem{theorem}[thm]{Theorem}

\newtheorem{lemma}[thm]{Lemma}
\newtheorem{corollary}[thm]{Corollary}
\newtheorem{proposition}[thm]{Proposition}

\theoremstyle{definition}
\newtheorem{remark}[thm]{Remark}

\newtheorem{definition}[thm]{Definition}
\newtheorem{example}[thm]{Example}

\renewcommand{\tilde}{\widetilde}

\newcommand{\sF}{{\mathcal F}}
\newcommand{\sG}{{\mathcal G}}
\newcommand{\sH}{{\mathcal H}}

\newcommand{\sL}{{\mathcal L}}

\newcommand{\sS}{{\mathcal S}}

\newcommand{\sX}{{\mathcal X}}

\newcommand{\A}{{\mathbb A}}

\newcommand{\C}{{\mathbb C}}

\newcommand{\G}{{\mathbb G}}

\renewcommand{\P}{{\mathbb P}}

\newcommand{\Z}{{\mathbb Z}}

\input{xy}
\xyoption{all}

\title[Rationality of varieties and $\mathbb{A}^1$-homotopy theory]{Rationality of varieties and $\mathbb{A}^1$-homotopy theory}
\author{Utsav Choudhury}
\address{Indian Statistical Institute\\ Stat Math Unit\\203 B.T. Road \\ Kolkata - 700108\\ India}
\email{utsav@isical.ac.in}
\author{Biman Roy}
\address{University of Southern California\\Department of Mathematics\\3620 Vermont Avenue\\Los Angeles, California 90089-2532}
\email{bimanroy31@gmail.com, bimanroy@usc.edu}
\keywords{$\mathbb{A}^1$-homotopy theory, Affine Algebraic Geometry, Zariski Cancellation}
\subjclass[2010]{Primary 14F42}
\begin{document}
\maketitle
\begin{abstract}
In this article we give an account of the results relating different rationality properties with $\mathbb{A}^1$-homotopy theoretic invariants.
\end{abstract}
\tableofcontents
\section{Introduction}
Let $k$ be a field and $Sm/k$ be the category of smooth finite type $k$-schemes. The unstable motivic homotopy category constructed by Morel-Voevodsky \cite{mv}, denoted by $\sH(k)$, is obtained by attaching simplicial homotopy to $Sm/k$ to obtain the category of simplicial presheaves, denoted by  $\bigtriangleup^{op}PSh(Sm/k)$,  and then inverting the Nisnevich local equivalences and the projection maps $pr : X \times \mathbb{A}^1 \to X$ for any $X \in Sm/k$. Objects in $\sH(k)$ are the simplicial presheaves on $Sm/k$ and any such object $\mathcal{X}$ will be called a space. For example, any $X \in Sm/k$ gives an object in $\sH(k)$. Recall that a morphism of spaces $f : \mathcal{X} \to \mathcal{Y}$ is called an $\mathbb{A}^1$-weak equivalence if the class of $f$ in $\sH(k)$ is an isomorphism.There is a pointed version of this construction, denoted by $\sH_{\bullet}(k)$, is called the pointed unstable $\mathbb{A}^1$-homotopy category. This homotopy category is constructed from the category of pointed spaces after inverting $\mathbb{A}^1$-weak equivalences of pointed spaces. A morphism of pointed spaces $f : (\mathcal{X}, x) \to (\mathcal{Y}, y)$ is an $\mathbb{A}^1$-weak equivalence if the corresponding morphism $f : \mathcal{X} \to \mathcal{Y}$, after forgetting the base point, is an isomorphism in $\sH(k)$. 

\

 For any space $\mathcal{X}$, define $\pi_0^{\mathbb{A}^1}(\mathcal{X})$ to be the Nisnevich sheaf associated to the presheaf 
$$\pi_{0, pre}^{\mathbb{A}^1}(\mathcal{X})(U) := Hom_{\sH(k)}(U, \mathcal{X}), \ \text{for }U \in Sm/k.$$
This is called the $\mathbb{A}^1$-connected component sheaf of the space $\mathcal{X}$. In this article we will survey the results relating $\mathbb{A}^1$-connected component sheaf of a variety $X$ and  rationality properties of $X$. We will give a brief account of moduli of $\mathbb{A}^1$-contractible varieties in every dimension.




\

 We will also give a brief survey of moduli of varieties $\mathbb{A}^1$-homotopic to mixed spheres. There are two circles $S^1_s$ and $S^1_t$, the simplicial and the Tate circles respectively, in $\sH(k)$ (\cite[Section 3.2]{mv}). These give us the following spheres for $i \geq 0$, $$S^i_s := (S^1_s)^{\wedge i}, \ S^i_t := (S^1_t)^{\wedge i}$$ and for $i \geq j \geq 0$ we have the following mixed spheres $$S^{i,j} := S_s^{i-j} \wedge S^j_t.$$  The quasi-affine varieties $\mathbb{A}^n_k \setminus \left\{(0,\dots, 0) \right\}$ are also mixed spheres in $\sH_{\bullet}(k)$, infact $\mathbb{A}^n_k \setminus \left\{(0,\dots, 0) \right\} \cong S^{2n-1, n}$ in $\sH_{\bullet}(k)$ (\cite[Section 3.2, Example 2.20]{mv}).

\


\noindent\textbf{Acknowledgements:} Choudhury was supported by  ANRF/ARGM project
ANRF/ARGM/2025/000240/MTR. A part of this material is based on work supported by the National Science Foundation under Grant No. DMS-2424139, while the author was in residence at the Simons Laufer Mathematical Scineces Institute in Berkeley, California, during the Fall 2026 semester. 
The second author thanks Aravind Asok for his continuous encouragement and the discussions. 
The second author would like to thank the University of Southern California for providing the resources during this work.

\section{$\mathbb{A}^1$-rigidity of $\mathbb{A}^1$-connected component}

In this section we discuss several general $\mathbb{A}^1$-rigidity results of the $\mathbb{A}^1$-connected component sheaf $\pi_0^{\mathbb{A}^1}(\mathcal{X})$ of a space $\mathcal{X}$.

\begin{definition} \cite[Definition 1.7]{mor}
Suppose, $\sF$ is a presheaf of sets on $Sm/k$.
\begin{enumerate}
\item $\sF$ is called $\A^1$-rigid, if for any $U \in Sm/k$, the morphism 
$$\sF(U) \to \sF(\A^1_U) \text{ induced by the projection } \A^1_U \to U,$$
is a bijection.
\item $\sF$ is called $\A^1$-rigid in one variable, if for every finitely generated, separable field extension $F/k$, the morphism 
$$\sF(Spec \ F) \to \sF(\A^1_F), \text{ induced by the projection } \A^1_F \to Spec \ F,$$
is a bijection.
\end{enumerate}

\end{definition}

From the construction of the $\A^1$-homotopy category $\sH(k)$, the presheaf $\pi_{0, pre}^{\mathbb{A}^1}(\mathcal{X})$ is $\A^1$-rigid for any space $\mathcal{X}$. Inspired by the discrete nature of the topological connected components associated to a topological space, Morel asked whether the Nisnevich sheafification $\pi_0^{\mathbb{A}^1}(\mathcal{X})$ is also an $\mathbb{A}^1$-rigid sheaf, for any space $\sX$ \cite[Conjecture 1.12]{mor}. Ayoub constructed a space $\sX$ over $k$, for which $\pi_0^{\A^1}(\sX)$ is not an $\A^1$-rigid sheaf \cite{ay}. However, such $\sX$ is not a representable sheaf (see also Theorem \ref{collection of A1-invariance results}(6)). Therefore, Morel's conjecture is false in general. There are some instances in which $\pi_0^{\A^1}(\sX)$ satisfies the $\A^1$-rigidity properties.


\begin{theorem} \label{collection of A1-invariance results}
\begin{enumerate}

\item For any space $\mathcal{X}$, $\pi_0^{\mathbb{A}^1}(\mathcal{X})$ is $\A^1$-rigid in one variable.
\item If $\pi_0^{\mathbb{A}^1}(\mathcal{X})$ is a separated $k$-scheme (or it is just a representable sheaf), then the $\A^1$-rigidity in one variable is equivalent to $\mathbb{A}^1$-rigidity. Thus in this case, $\pi_0^{\A^1}(\sX)$ is $\A^1$-rigid.
\item For homogeneous $H$-spaces $\mathcal{X}$, $\pi_0^{\mathbb{A}^1}(\mathcal{X})$ is $\mathbb{A}^1$-rigid.
\item If $\sX$ is a smooth toric variety over $k$, then $\pi_0^{\A^1}(\sX)$ is $\A^1$-rigid.
\item If $X$ is a surface, which is either affine or projective over an algebraically closed field of characteristic zero then $\pi_0^{\A^1}(\sX)$ is $\A^1$-rigid.
\item Let $X$ be a smooth proper $\mathbb{A}^1$-rigid threefold and let $Y \to X$ be a closed surface such that Y has an isolated singularity at a $k$-point $y \in Y(k) $ admitting a resolution $b : Y'
\to Y$ such that $C = b^{-1}(y)$ is a cuspidal rational curve. Then there exists a space $\mathcal{M}_{X,Y}$ such that $\pi_0^{\mathbb{A}^1}(\mathcal{M}_{X,Y})$ is not $\mathbb{A}^1$-rigid.
\end{enumerate}
\end{theorem}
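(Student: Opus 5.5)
This theorem collects results from several sources, so the plan is to treat each item separately and reduce it to known structural facts about $\pi_0^{\A^1}$. The common tool is the surjection $\sX \to \pi_0^{\A^1}(\sX)$ of Nisnevich sheaves (Morel--Voevodsky), together with the $\A^1$-rigidity of the presheaf $\pi_{0,pre}^{\A^1}(\sX)$.

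For (1), the key observation is that a field $F$ is Henselian local, so Nisnevich sheafification does not change sections on $Spec\, F$; thus $\pi_0^{\A^1}(\sX)(F)=\pi_{0,pre}^{\A^1}(\sX)(F)$. For $\A^1_F$ I would use Morel's result that $\pi_0^{\A^1}(\sX)$ is unramified-like on fields. Concretely, sections over $\A^1_F$ embed into sections over the generic point $F(t)$, and they are determined by the values at the points of $\A^1_F$. On each Henselian local ring of $\A^1_F$ sheafification is again harmless. The plan is to show that any section over $\A^1_F$ is Nisnevich-locally the pullback of a presheaf section. Then one glues using the fact that a Nisnevich cover of $\A^1_F$ can be refined so that the sections agree on overlaps. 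This is Morel's argument via the $\A^1$-invariance of $\pi_{0,pre}$ on $\A^1_F$ and the structure of curves over fields. I expect this to be the main obstacle, and I would follow \cite{mor} closely.

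For (2), suppose $\pi_0^{\A^1}(\sX)$ is represented by a separated scheme $Z$. A morphism $\A^1_U \to Z$ that is constant on each fibre $\A^1_x$ over the points $x\in U$, as (1) gives after base change to residue fields, agrees with its restriction along the zero section composed with projection. Since $Z$ is separated and $\A^1_U$ is reduced, two maps that agree on a dense set of points with equal residue field values are equal.

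For (3), the plan is to use the H-space multiplication to identify $\pi_0^{\A^1}$ with a quotient where sheafification preserves $\A^1$-invariance, following Choudhury's argument for H-groups. For (4), I would invoke the known description: a smooth toric variety is covered by $\A^n\times\G_m^r$-type pieces, and $\pi_0^{\A^1}$ is computed from the fan combinatorics, which gives a rigid sheaf. For (5), I would use the classification of surfaces. In the rational or ruled case the sheaf is trivial or comes from a curve. In the non-uniruled case $X$ is $\A^1$-rigid, so $\pi_0^{\A^1}(X)=X$, and (2) applies. For (6), I would follow Ayoub's construction: glue $X$ along $Y$ with the contracted cuspidal curve to build $\mathcal{M}_{X,Y}$. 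The cusp then gives a nonconstant family over $\A^1$ that becomes trivial Nisnevich-locally but not globally.
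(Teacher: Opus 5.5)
Items (2), (4) and (6) are fine at the level of the paper, which also only cites for (4) and (6). In (2), comparing $s$ with $s(0)\circ\mathrm{pr}$ fibrewise is a slightly more direct route than the paper's criterion $s(0)=s(1)$. You must, however, restrict to points $x\in U$ with $\kappa(x)/k$ separable (e.g.\ generic points), since (1) says nothing about other fields. In (3), the step that actually does the work is injectivity of the restriction $\pi_0^{\A^1}(\sX)(U)\to\pi_0^{\A^1}(\sX)(V)$ to dense opens, obtained from homogeneity plus Gabber's presentation lemma. This reduces rigidity to (1); your ``quotient'' description does not supply it.

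\textbf{The gap in (1) is the gluing step.} That a section over $\A^1_F$ is Nisnevich-locally a presheaf section is automatic. Refining covers until local classes agree on overlaps does not let you glue, because $\pi^{\A^1}_{0,pre}(\sX)$ is not a sheaf. The missing ingredient is the Brown--Gersten property of an $\A^1$-fibrant model of $\sX$: for an elementary distinguished square, the map $\pi_0(\sX(X))\to\pi_0(\sX(U))\times_{\pi_0(\sX(U\times_XV))}\pi_0(\sX(V))$ is surjective, because the corresponding square of simplicial sets is homotopy cartesian. Apply this to a DVR $O$, covered by its fraction field and its Henselization; every corner is a field or Henselian, so presheaf and sheaf sections coincide there. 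This gives surjectivity of $\pi^{\A^1}_{0,pre}(\sX)(O)\to\pi_0^{\A^1}(\sX)(O)$. Noetherian induction extends it to curves such as $\A^1_F$. Then $\A^1$-rigidity of $\pi^{\A^1}_{0,pre}(\sX)$, together with the zero section, yields $\pi_0^{\A^1}(\sX)(F)\cong\pi_0^{\A^1}(\sX)(\A^1_F)$. Your auxiliary claim that sections over $\A^1_F$ inject into sections over $F(t)$ because $\pi_0^{\A^1}$ is ``unramified-like'' is unsupported. The paper stresses that $\pi_0^{\A^1}(\sX)$ need not be unramified: for the non-separated curve $\tilde C$, restriction to a dense open is not injective. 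The paper takes this result from \cite[Corollary 3.2]{u}, not from Morel.

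\textbf{Your plan for (5) has a false step and omits half the statement.} A non-uniruled projective surface need not be $\A^1$-rigid. The blow-up $X$ of an abelian surface at a point is non-uniruled but contains the exceptional curve $E\cong\P^1$. An $\A^1\subset E$ through two points of $E$ is a naive homotopy between them, so they coincide in $\pi_0^{\A^1}(X)(k)$. Hence $\pi_0^{\A^1}(X)\neq X$ and (2) does not apply. What is needed is the Balwe--Hogadi--Sawant theorem that $\sS(X)\to\sS^2(X)$ is an isomorphism for non-uniruled proper surfaces. Then the epimorphisms $\sS(X)\to\pi_0^{\A^1}(X)\to\sL(X)$ are isomorphisms, and $\pi_0^{\A^1}(X)\cong\sL(X)$ is rigid.

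In the birationally ruled case over a curve $C$ of positive genus, ``the sheaf comes from $C$'' is only right for minimal models. For non-minimal ones $\sS(X)\not\cong\sS^2(X)$, and rigidity rests on Balwe--Sawant's identification $\pi_0^{\A^1}(X)\cong\sS^2(X)$.

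Finally, you do not treat affine surfaces at all, and the projective classification does not handle them: for example, $\A^1\times\G_m$ is rational but $\pi_0^{\A^1}(\A^1\times\G_m)\cong\G_m$. The paper relies on \cite[Corollary 4.40]{aritra}. Its ingredients, recalled in Section 3, are that non-injectivity of $\sS(X)\to\sS^2(X)$ forces $\A^1$-uniruledness, together with an analysis of surfaces carrying $\A^1$-fibrations over curves.
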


\begin{proof}
\begin{enumerate}
\item The result is proved in \cite[Corollary 3.2]{u}. The main goal is to show that for any essentially smooth curve $C$ over $k$, the map 
$$\pi_{0, pre}^{\mathbb{A}^1}(\mathcal{X}) (C) \to \pi_0^{\mathbb{A}^1}(\mathcal{X})(C)$$ 
is surjective. The surjection intuitively follows from the fact that the Nisnevich cohomological dimension is bounded by the Krull dimension. In the proof of \cite[Corollary 3.2]{u}, the main ingredient  is that Nisnevich Brown-Gersten property induces a surjection at $\pi_0$ \cite[Lemma 2.2]{u}: if $\sX$ is a Nisnevich fibrant space (in particular, $\A^1$-fibrant) and if the following pullback square 
  
        \[
        \xymatrix{
U \times_X V \ar[d] \ar[r] &V \ar[d]^p \\
        	U \ar[r]^j & X }
        \]
 is an elementary distinguished square \cite[Definition 1.3, Section 3.1]{mv}, then the induced morphism
$$\pi_0(\sX(X)) \to \pi_0(\sX(U)) \times_{\pi_0(\sX(U \times_X V))} \pi_0(\sX(V))$$       
is a surjection (\cite[Definition 1.13 and Remark 1.15, Section 3.1]{mv}, \cite[Lemma 2.2]{u}). When $C$ is spectrum of a DVR, the surjection follows from this Brown Gersten property of $\pi_0$. For a curve $C$ the surjection then follows from Noetherian induction. 

\item Suppose $\pi_0^{\A^1}(\sX)$ is given by a separated scheme $Y$. If $Y$ is $\A^1$-rigid, then it also $\mathbb{A}^1$-rigid in one variable. Note that, a sheaf $\mathcal{F}$ on $Sm/k$ is $\mathbb{A}^1$ rigid if for any smooth scheme $U$ and any  section $s \in \mathcal{F}(\mathbb{A}^1_U)$, the maps induced by the $0$-section and $1$-section are equal, that is $s(0) = s(1) \in \sF(U)$ \cite[Lemma 2.16]{MWV}. Indeed, suppose that for any $U \in Sm/k$ for any $s \in \sF(\A^1_U)$, we have $s(0) = s(1)$. Consider the morphism

$$\Phi: \A^1_U \times_k \A^1_k \to \sF \text{ given by }(\alpha,x,\beta) \mapsto s(\alpha\beta, x).$$
The section $\Phi$ satisfies
$$\Phi(\alpha,x,0) = \Phi(\alpha, x, 1) \in \sF(\A^1_U) \text{ (by hypothesis) and }$$
$$\Phi(\alpha, x, 0) = s \circ i_0 \circ \text{pr}, \ \Phi(\alpha, x, 1) = s.$$
Therefore given any such $s$ factors as 
$$\mathbb{A}^1_U \xrightarrow{\text{pr}} U \xrightarrow{s(0)} \mathcal{F}.$$

Suppose the separated scheme $Y$ is also $\A^1$-rigid in one variable. Let $s : \mathbb{A}^1_U \to Y$ be a morphism. We need to prove that 
$$s(0) = s(1) : U \to Y.$$
Note that for any $u \in U$, the composition of the morphisms
$$\A^1_{\kappa(u)} \xrightarrow{(Id,u)} \A^1_U \xrightarrow{s} Y$$
factors through $Spec \ \kappa(u)$. Thus
we have, 
$$s(0,u) = s(1,u) : Spec(\kappa(y)) \to Y.$$
Let $\delta : Y \to Y \times_k Y$ be the diagonal embedding and 
$$S := (s(0), s(1)) : U \to Y \times_k Y$$ 
be the morphism obtained from $s(0)$ and $s(1)$. Then the pullback $S^*(\delta)$ is a closed subscheme of $U$ that contains the closed points $u$ of $U$ such that the residue field $k(u)/k$ is a separable extension. Since such points are dense in $U$ \cite[Tag 056U]{stack}, therefore $S^*(\delta)$ equals to $U$ and we are done. Here for any scheme $Y$, since the diagonal $Y \to Y \times_k Y$ is locally closed and we would get the same conclusion. Since $\pi_0^{\A^1}(\sX)$ is $\A^1$-rigid in one variable, therefore if $\pi_0^{\A^1}(\sX)$ is representable, then it is $\A^1$-rigid. For example, if $\sX$ is $\A^1$-connected, $\sX$ is a proper, open subscheme of $\A^1_k$, $\sX$ is an abelian variety or $\sX$ is a smooth projective curve of positive genus, then $\pi_0^{\A^1}(\sX)$ is representable, hence it is $\A^1$-rigid.

\item See \cite[Theorem 4.18]{u} for the proof. The main ingredient here is that in case of homogeneous $H$-spaces $\sX$ on $Sm/k$ there is a distinguished base point over $k$ which can be moved (homotoped) to any point over an essentially smooth  Hensel local $k$-scheme $U$. Then we apply Gabber  presentation to conclude that for any smooth scheme $U$ and any open subcheme $V \subset U$, the restriction map of the $\mathbb{A}^1$-conneceted component sheaf is injective, therefore we are reduced to the $\mathbb{A}^1$-rigidity in single variable.
\item This is proved in \cite[Lemma 4.2, Lemma 4.4]{w}. It is shown if $\sX$ is a toric variety over $k$, then $\pi_0^{\A^1}(\sX)$ is isomorphic to $\sS(X)$ (Definition \ref{geometric sheaf}). In this case, either $\sX$ is $\A^1$-connected (in this case, $\sS(X)$ is the trivial sheaf) or $\pi_0^{\A^1}(\sX)$ is isomorphic to the product of $\G_m$'s.
\item The projective case is proved in \cite[Corollary 3.15]{bhs} (for non-uniruled case), \cite[Theorem 1.2]{bs} (for the birationally ruled case) and the affine case is proved in \cite[Corollary 4.40]{aritra} (see Section \ref{rationality component} for the details).
\item This is \cite[Construction 8]{ay}.
\end{enumerate}
\end{proof}

\begin{remark} \label{remarks on pi0A1}
\begin{enumerate}
\item The single variable $\mathbb{A}^1$-rigidity is a topological property of the  site with interval and on the other hand the $\mathbb{A}^1$-rigidity seems to be an internal ``geometric" property of the space. 
\item For a space $\sX$ over $k$, the sheaf $\pi_0^{\A^1}(\sX)$ satisfies the following universal property \cite[Lemma 2.8]{bhs}: any morphism $\sX \to \sF$ to an $\A^1$-rigid sheaf $\sF$ uniquely factors through the canonical epimorphism $\sX \to \pi_0^{\A^1}(\sX)$.
\item An unramified sheaf of sets \cite[Definition 2.1]{mor} is $\A^1$-rigid, if and only if it is $\A^1$-rigid in one variable \cite[Lemma 2.16]{mor}. However, $\pi_0^{\A^1}(\sX)$ is not an unramified sheaf of sets in general.


Suppose, $\Tilde{C}$ is the smooth, non-separated curve obtained by gluing two copies of $C:= \A^1_k \setminus \{1\}$ along $\A^1_k \setminus \{0,1\}$. Then $\pi_0^{\A^1}(\Tilde{C})$ is not unramified sheaf of sets. 
Indeed by the construction, $\Tilde{C}$ is the pushout of the diagram (in the category of Nisnevich sheaves on $Sm/k$)
\[
\xymatrix{
\A^1_k \setminus \{0,1\} \ar[r] \ar[d] & C \ar[d]^{i_1} \\
C \ar[r]^{i_2} & \Tilde{C}
}
\]
so, the morphism $\Tilde{C}(C) \to \Tilde{C}(\A^1_k \setminus \{0,1\})$ is not injective. The non-separated curve $\Tilde{C}$ is $\A^1$-rigid. Indeed, if there is a non-constant morphism $\A^1_k \to \Tilde{C}$, then it must be surjective, as $C$ is $\A^1$-rigid. But the surjective map would induce an injective $k$-algebra morphism between the rings of regular functions $k[t]_{t-1} \to k[t]$. The unit element $t-1 \in k[t]_{t-1}$ would map to a non-zero element in $k$, this would contradict the injectivity. The $\A^1$-rigidity of $\Tilde{C}$ implies that $\pi_0^{\A^1}(\Tilde{C}) \cong \Tilde{C}$ 
is not an unramified sheaf of sets.
Consider the example
$$X \equiv \{x^2z=y(y-1)+x\} \subseteq C \times_k \A^2_k$$ 
is an smooth, affine surface over $k$, which admits an $\A^1$-fibration \cite[Section 1]{doub} 
$$\text{pr}: X \to C \text{ given by }(x,y,z) \mapsto x.$$
Every fiber of $\text{pr}$ is $\A^1$, except the fiber over $0$ is isomorphic to the disjoint union of two copies of $\A^1$'s. So the induced map $\pi: X \to \Tilde{C}$, which maps two irreducible components of $\text{pr}^{-1}(0)$ to the two origins respectively, is an $\A^1$-bundle (in Zariski topology). So $\pi$ is an $\A^1$-weak equivalence and therefore, 
$$\pi_0^{\A^1}(X) \cong \Tilde{C} \text{ is not an unramified sheaf of sets}.$$  

It is natural to ask when $\pi_0^{\A^1}(X)$ is an unramified sheaf of sets. 
If $\pi_0^{\A^1}(X)$ is unramified, then it is $\A^1$-rigid.
\item For a homogeneous $H$-space $\sX$, $\pi_0^{\A^1}(\sX)$ is $\A^1$-rigid. The restriction map $\pi_0^{\A^1}(\sX)(U) \to \pi_0^{\A^1}(\sX)(V)$ is injective, for an open subscheme $V$ of $U \in Sm/k$ \cite[Corollary 4.18]{u}. One can ask whether $\pi_0^{\A^1}(\sX)$ is unramified sheaf of sets \cite[Definition 2.1]{mor} or equivalently, the restriction $\pi_0^{\A^1}(\sX)(U) \to \pi_0^{\A^1}(\sX)(V)$ is bijective, for every $V \subseteq U$ an open subscheme such that $U \setminus V$ is of codimension $\geq 2$ in $U$ \cite[Remark 2.4]{mor}. If $\sX$ is a semisimple, simply connected algebraic group over perfect field (for examples $SL_n(k), \ n\geq 2$, $Sp_{2n}(k)$, $n \geq 1$), then $\pi_0^{\A^1}(\sX)$ is a birational sheaf of groups (in particular unramified, \cite[Theorem 4.5]{bhs2}, see also \cite{bhs2corri}).
\end{enumerate}
\end{remark}

\section{Rationality and $\mathbb{A}^1$-connected components} \label{rationality component}
Suppose, $X \in Sm/k$. In this section, we discuss the comparisons between the $\A^1$-connected component sheaf $\pi_0^{\A^1}(X)$ and the $\A^1$-chain connected component sheaf $\pi_0^{ch}(X)$ (Definition \ref{chain connected}). As consequences, we discuss the rationality properties of $X$, related to the $\A^1$-connected component sheaf $\pi_0^{\A^1}(X)$ of $X$. Throughout the section we assume $k$ to be an algebraically closed field of characteristic $0$.
\begin{definition} \cite[Section 2.3]{mv}
For a presheaf of sets $\sF$ on $Sm/k$, the simplicial presheaf $Sing_*(\sF)$ is defined as
$$Sing_*(\sF)(U)=\sF(U \times_k \Delta^n_a), \text{ for }U \in Sm/k,$$
where $\Delta^n_a:= Spec \ \frac{k[x_0,x_1, \dots,x_n]}{\sum_{i=0}^n x_i-1}$, $n \geq 0$ is the cosimplicial object in $Sm/k$ and it is isomorphic to $\A^n_k$.
\end{definition}
\begin{definition}\cite[Definition 2.2.4]{am} \label{chain connected}
The $\mathbb{A}^1$-chain connected component sheaf of $X$, denoted by $\pi_0^{ch}(X)$, is defined to be the connected component sheaf of $Sing_*(X)$, that is it is the Nisnevich sheaf associated to the presheaf of sets
$$U \mapsto \pi_0(Sing_*(X)(U)).$$
More precisely, the section $\pi_0(Sing_*(X)(U))$ is actually the quotient $X(U)/\sim$ associated to the equivalence relation ``$\sim$" on $X(U)$ given by for two morphisms $f,g:U \to X$, $f \sim g$ if there are $H_1, \dots, H_n: \A^1_U \to X$ such that
$$H_i(1)=H_{i+1}(0): U \to X, \ H_1(0)=f \text{ and }H_n(1)=g,$$
where $H_i(0), H_i(1)$ are induced by the $0$-section, $1$-section $U \to \A^1_U$ respectively (see also Definition \ref{geometric sheaf}).

$X$ is called (weakly) $\A^1$-chain connected, if the set $\pi_0^{ch}(X)(Spec \ F)$ is a singleton set, for every finitely generated separable field extension $F/k$.
\end{definition}    
\begin{remark}
There is a canonical morphism $\pi_0^{ch}(X) \to \pi_0^{\A^1}(X)$, which is an epimorphism \cite[Lemma 2.2.5]{am}. Therefore by a result of Morel, if $X$ is weakly $\A^1$-chain connected, then $X$ is $\A^1$-connected \cite[Corollary 2.2.6]{am}.
\end{remark}

\begin{theorem}\label{rationally connected}
For a proper variety $X \in Sm/k$, the sections of $\pi_0^{ch}(X)$ and $\pi_0^{\A^1}(X)$ over $Spec \ F$ are same, for every finitely generated separable field extension $F/k$. Thus $X$ is $\A^1$-connected, if and only if $X$ is weakly $\A^1$-chain connected.
\end{theorem}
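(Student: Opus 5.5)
The plan is to compare $\pi_0^{ch}(X)(F)$ and $\pi_0^{\A^1}(X)(F)$ through an auxiliary space built from $Sing_*(X)$ that is $\A^1$-weakly equivalent to $X$; this is the approach of Asok--Morel in \cite{am}. By the Remark above, the canonical map $\pi_0^{ch}(X)(F) \to \pi_0^{\A^1}(X)(F)$ is surjective, since it is an epimorphism of Nisnevich sheaves and $Spec\ F$ is Henselian local (a point for the Nisnevich topology). So only injectivity needs proof. The second sentence of the theorem then follows at once: weak $\A^1$-chain connectedness means the left side is a point for every $F$, and $\A^1$-connectedness of $X$ can be tested on such field-valued sections via the unramifiedness result of Morel already cited.

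For injectivity I would use the explicit model of $\A^1$-localization given by iterating $Sing_*$ and Nisnevich fibrant replacement, $L_{\A^1} = \mathrm{colim}_n (Ex \circ Sing_*)^n$. On sections over a Henselian local scheme such as $Spec\ F$, Nisnevich fibrant replacement does not change $\pi_0$. Hence $\pi_0^{\A^1}(X)(F)$ is the colimit of the sets $\pi_0^{ch,(n)}(X)(F)$ obtained by repeatedly applying the operation ``sheafify $\pi_0$ of $Sing_*$''. It therefore suffices to show that $\pi_0^{ch}(X)(F) \to \pi_0(Sing_*(Ex(Sing_* X)))(F)$ is injective, and then to iterate.

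The key input is properness, via the valuative criterion. A chain of $\A^1$-homotopies in the second stage is built from sections of $\pi_0^{ch}(X)$ over $\A^1_F$. Such a section is only Nisnevich-locally a genuine chain: it is a compatible family of chains of naive homotopies on a Nisnevich cover of $\A^1_F$. I would show that for proper $X$ these glue, up to chain equivalence, to a global class on the generic point $F(t)$. By the valuative criterion, every $F(t)$-point of $X$ extends uniquely to a morphism $\P^1_F \to X$, and likewise every homotopy $\A^1_{F(t)} \to X$ extends over a dense open of $\A^1_{\P^1_F}$. Specializing at $t=0$ and $t=1$ then shows that the endpoints are naively chain equivalent over $F$. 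In other words, $\pi_0^{ch}(X)$ restricted to fields is ``$\A^1$-invariant in one variable'' already at the first stage, so the tower stabilizes on field sections.

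The main obstacle is this last specialization step. The homotopies extend only away from finitely many points of the base, so I first need to show that two $F(t)$-points of $X$ that are chain equivalent over $F(t)$ have chain-equivalent specializations at a given $F$-point of $\P^1_F$. The approach I would try first is as follows. Each homotopy $\A^1_{F(t)} \to X$ defines a rational map $\P^1 \times \P^1 \dashrightarrow X$. Resolve its indeterminacy by blowing up, which uses the properness of $X$. The fiber over the specialization point is then a chain of rational curves, and this chain furnishes the desired naive $\A^1$-chain connecting the two specializations.
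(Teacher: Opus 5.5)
Your argument is correct in outline, but it takes a different route from the paper.

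\textbf{How the paper argues.} The paper never opens up a model of $\A^1$-localization. Following Asok--Morel, it assembles the field values $F\mapsto\pi_0^{ch}(X)(F)$ into a birational sheaf $\pi_0^{b\A^1}(X)$. The residue maps come from the valuative criterion and the specialization of chain equivalence along DVRs (Asok--Morel, Lemma 6.2.3). Birational sheaves are $\A^1$-rigid, so the universal property of $\pi_0^{\A^1}(X)$ gives a map $\pi_0^{\A^1}(X)\to\pi_0^{b\A^1}(X)$. Its composite with $\pi_0^{ch}(X)\to\pi_0^{\A^1}(X)$ is bijective on field points, and this forces injectivity. The Balwe--Hogadi--Sawant variant instead proves that every $\sS^n(X)$ is ``almost proper'' and lifts ghost homotopies to naive ones.

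\textbf{How yours differs.} You use the same geometric input: specialization of $\A^1$-chain equivalence for proper $X$, via resolving a rational map from a surface. But you replace the birational-sheaf formalism and the universal property by a direct computation in the tower $\mathcal{Y}_n=(Ex\circ Sing_*)^nX$. The key observation is that $\pi_0\mathcal{Y}_{n+1}(F)$ is the coequalizer of $\pi_0\mathcal{Y}_n(\A^1_F)\rightrightarrows\pi_0\mathcal{Y}_n(F)$. This frees you from needing that birational sheaves are unramified and $\A^1$-rigid, and from the dictionary between birational sheaves and field data with residue maps. The paper's route, in exchange, is formal once the birational sheaf exists. It also yields a sheaf-level map $\pi_0^{\A^1}(X)\to\pi_0^{b\A^1}(X)$, which is reused in the comparison with birational homotopy theory.

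\textbf{What you still need to supply.} Your sentence ``rigid in one variable at the first stage, so the tower stabilizes'' is not a formal implication. The stabilization has to be proved by induction on the statement that $X(E)/\!\sim\;\to\pi_0\mathcal{Y}_n(E)$ is bijective for every finitely generated $E/k$. To pass from $n$ to $n+1$, you must show $H(0)=H(1)$ for every $H\in a\pi_0(\mathcal{Y}_n)(\A^1_F)$. This goes as follows.
\begin{enumerate}
\item Since $X\to a\pi_0(\mathcal{Y}_n)$ is an epimorphism, near $0$ the section $H$ is represented by some $\phi\colon V\to X$ on a connected Nisnevich neighbourhood $(V,v)\to(\A^1_F,0)$.
\item The restriction $H|_{F(t)}$ is represented, by the inductive hypothesis over $F(t)$, by some $x_\eta\in X(F(t))$, which extends to $\bar x\colon\P^1_F\to X$.
\item The inductive hypothesis over $L=k(V)$ gives $\phi_L\sim\bar x_L$.
\item Specialization along $\mathcal{O}_{V,v}$ then gives $H(0)=[\phi(v)]=[\bar x(0)]$, and likewise at $1$.
\item Finally, $\bar x|_{\A^1_F}$ joins $\bar x(0)$ to $\bar x(1)$.
\end{enumerate}
So you need the specialization lemma for DVRs whose fraction field is a finite extension $L$ of $F(t)$, not only at $F$-points of $\P^1_F$ as you phrase it. In the resolved surface you should also check that the path of curves between the two $F$-rational points consists of copies of $\P^1_F$ meeting in $F$-points. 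This holds because that path in the tree is unique, hence Galois-stable, and it is what guarantees you obtain naive homotopies over $F$.
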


\begin{proof}
This is proved in \cite[Theorem 2.4.3]{am} (and also \cite[Corollary 3.10]{bhs}). We discuss a sketch here.

In \cite[Section 6]{am}, Asok-Morel constructed a birational and $\A^1$-rigid sheaf ($\A^1$-rigidity is redundant, see Remark \ref{rational connectedness remark})
$\pi_0^{b\A^1}(X)$, associated to a proper scheme $X \in Sm/k$ such that its section over $Spec \ F$
agrees with the section $\pi_0^{ch}(X)(Spec \ F)$, for every finitely generated separable field extension $F/k$. For $X$ proper, the natural map $X(O_v) \rightarrow X(\kappa(v))$, together  with the bijection $X(O_v) \cong X(K(O_v))$ induces by \cite[Lemma 6.2.3]{am} a residue morphism $$\pi_0^{b\A^1}(X)(Spec \ F) \to \pi_0^{b\A^1}(X)(Spec \ \kappa_v),$$ where $K(O_v) = F$ is a finitely generated separable field extension of $k$, $v$ is a discrete valuation on $F$ with $O_v$ its associated DVR and $\kappa(v)$ the residue field.
A birational sheaf is equivalent with these datas \cite[Proposition 6.2.4]{am}.
 
The universal property of $\pi_0^{\A^1}(X)$ (see Remark \ref{remarks on pi0A1}(2)) defines the morphism $\pi_0^{\A^1}(X) \to \pi_0^{b\A^1}(X)$ which gives bijection of the composition of the morphisms
$$\pi_0^{ch}(X) \to \pi_0^{\A^1}(X) \to \pi_0^{b\A^1}(X)$$
over the field valued sections. Since the morphism $\pi_0^{ch}(X) \to \pi_0^{\A^1}(X)$ is an epimorphism of sheaves, so the morphism $\pi_0^{ch}(X)(F) \to \pi_0^{\A^1}(X)(F)$ is a bijection.

The proof in \cite{bhs} is different. Balwe-Hogadi-Sawant constructed the universal $\A^1$-rigid sheaf $\sL(X)$ (Definition \ref{geometric sheaf}), associated to $X$ to be the filtered colimit of the $n$-th iterations of $\pi_0^{ch}(X)$ (following the notation in \cite{bhs}, $\sS(X):=\pi_0^{ch}(X)$). They proved that for proper $X \in Sm/k$, the iteration $\sS^n(X)$ shares following properties with the proper schemes: a morphism $U \to \sS^n(X)$, for $U \in Sm/k$ with $dim(U) \leq 2$, can be extended to some smooth, projective compactification $\bar{U}$ of $U$ on ``points" and for $C \in Sm/k$ of dimension $1$, $C^\prime \subseteq C$ an open subscheme, the restriction map $\sS^n(X)(C) \to \sS^n(X)(C^\prime)$ is injective on ``points" (see \cite[Definition 3.6, Lemma 3.8]{bhs}). They called the sheaf satisfying such properties to be an almost proper sheaf. They proved that
an existence of $\A^1$-ghost homotopy (Definition \ref{ghost homotopy definition}) in an almost proper sheaf $\sF$ between two field valued points would lift to a naive $\A^1$-homotopy in $\sF$ between the points \cite[Theorem 3.9]{bhs}. This implies that $\sS^n(X)(Spec \ F) \cong \sS(X)(Spec \ F)$, for every $n \geq 1$ and thus
the universal morphism
$$\pi_0^{\A^1}(X) \to \sL(X)$$
induces surjection $\pi_0^{\A^1}(X)(Spec \ F) \to \sS(X)(Spec \ F)$. Consequently,  the epimorphism 
$$\sS(X) \to \pi_0^{\A^1}(X)$$
induces bijection over the section $Spec \ F$, for every finitely generated separable field extension $F/k$.


\end{proof}
\begin{remark} \label{rational connectedness remark}
\begin{enumerate}
\item A birational presheaf of sets on $Sm/k$ is an $\A^1$-rigid sheaf (in Nisnevich topology). Firstly, it is an unramified sheaf of sets on $Sm/k$ (\cite[Lemma 6.1.2]{am}, \cite[Definition 2.1]{mor}). Also it is $\A^1$-rigid in one variable by \cite[Appendix A]{ks}. Therefore, it is $\A^1$-rigid \cite[Lemma 2.16]{mor}.
\item The sheaf $\pi_0^{b\A^1}(X)$ satisfies the following universal property: a morphism $X \to \sF$, where $\sF$ is a birational and $\A^1$-rigid sheaf, uniquely factors through the canonical morphism $X \to \pi_0^{b\A^1}(X)$ \cite[Remark 6.2.2]{am}. 
\item Suppose, 
$X \in Sm/k$ is a smooth, proper variety and $F/k$ is a finitely generated separable field extension. Theorem \ref{rationally connected} implies that if $X$ is $\A^1$-connected if and only if given any $x,y \in X(F)$, there are $\gamma_1, \dots, \gamma_n: \A^1_F \to X$ such that
$$\gamma_i(1)=\gamma_{i+1}(0), \text{ for every }i \text{ and }\gamma_1(0)=x, \ \gamma_n(1)=y,$$
where $\gamma_i(0), \gamma_i(1) \in X(F)$ are induced by the $0$-section and $1$-section $Spec \ F \to \A^1_F$ respectively. Thus the $\A^1$-connectedness of $X$ implies that $X$ is
also rationally connected \cite[Definition 3.2, Chapter IV]{kollar}.
Thus if moreover the base field is $k = \C$, then $X$ is uniruled and $X$ has negative Kodaira dimension (see also Theorem \ref{uniruledness generality}). So if $X$ is an $\A^1$-connected, smooth, proper surface over $\C$, 
then $X$ is rational. 

However, the rational connectedness is strictly a weaker notion than the $\A^1$-connectedness. There is a unirational (which admits a dominant map from a projective space, in particular it is rationally connected), non-rational, proper threefold over $\C$ \cite{artinmumford}, which is not $\A^1$-connected (as it has  non-trivial Brauer group). 
\end{enumerate}
\end{remark}
Here we first recall the definition of birational unstable motivic homotopy category from \cite[Definition 3.4]{cb} (see also \cite[Definition 2.6]{pp} and \cite{ck}) and in Theorem \ref{birational connected theorem}, we will see that the birational connected component $\pi_0^{b\A^1}(X)$, for a proper $X \in Sm/k$, is indeed the connected component sheaf in that homotopy category.
\begin{definition}
The left Bousfield localisation of the projective model structure on $\bigtriangleup^{op}PSh(Sm/k)$ with respect to the open dense immersions $U \hookrightarrow X$, $X \in Sm/k$ gives the unstable birational model structure. The associated homotopy category will be denoted by $\sH_b(k)$.

For a space $\sX$ over $k$, the connected component presheaf of sets (in the unstable birational model structure) is defined as
$$U \in Sm/k \mapsto Hom_{\sH_b(k)}(U, \sX),$$
and it is denoted by $\pi_0^b(\sX)$. Since it is a birational presheaf of sets on $Sm/k$ \cite[Definition 6.1.1]{am}, the presheaf $\pi_0^b(\sX)$ is actually a Nisnevich sheaf of sets \cite[Lemma 6.1.2]{am}.
\end{definition}
\begin{remark} \label{birational model remark}
\begin{enumerate}
\item An $\A^1$-weak equivalence is an isomorphism in $\sH_b(k)$. Indeed, the \v{C}ech hypercovers and the projection maps $\A^1_X \to X$, where $X \in Sm/k$ generate the $\A^1$-weak equivalences \cite[Proposition 8.1]{dugger}. A \v{C}ech hypercover associated to a Nisnevich covering is an isomorphism in $\sH_b(k)$
, since a Nisnevich covering has a section over a dense open subset. By definition, $\pi_0^b(\sX)$ is a birational presheaf of sets on $Sm/k$, so it is $\A^1$-rigid (Remark \ref{rational connectedness remark}).
So the projections $\A^1_X \to X$, $X \in Sm/k$ are also isomorphisms in $\sH_b(k)$.
\item Any birational sheaf is a fibrant object in the birational model structure. So the connected component $\pi_0^b(\sX)$ satisfies the same universal property as $\pi_0^{b\A^1}(X)$ (see Remark \ref{rational connectedness remark}(2)): given  a birational sheaf of sets $\sF$ on $Sm/k$, the morphism of sets
$$Hom_{PSh(Sm/k)}(\pi_0^b(\sX), \sF) \to Hom_{PSh(Sm/k)}(\sX, \sF)$$
induced by the canonical map $\sX \to \pi_0^b(\sX)$, is a bijection (here $PSh(Sm/k)$ is the category of presheaves of sets on $Sm/k$).
\end{enumerate}
\end{remark}
\begin{theorem} \label{birational connected theorem} (\cite[Theorem 3.5]{cb}, see also \cite[Proposition 1.9]{ck})
The birational connected component of a proper variety is its connected component in birational homotopy theory.

\end{theorem}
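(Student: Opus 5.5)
We have to show that for a proper $X \in Sm/k$ the canonical map $\pi_0^b(X) \to \pi_0^{b\A^1}(X)$ is an isomorphism of sheaves. The plan is to compare universal properties. Both sheaves are birational. Both receive a canonical morphism from $X$, and by Remark \ref{birational model remark}(2) and Remark \ref{rational connectedness remark}(2) each is initial among morphisms from $X$ to birational sheaves. The one subtlety is that Remark \ref{rational connectedness remark}(2) is phrased for sheaves that are birational and $\A^1$-rigid, whereas Remark \ref{birational model remark}(2) is phrased for all birational sheaves. Remark \ref{rational connectedness remark}(1) removes the discrepancy, since every birational sheaf is automatically $\A^1$-rigid. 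The two universal properties are therefore literally the same.

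First, since $\pi_0^{b\A^1}(X)$ is birational, Remark \ref{birational model remark}(2) gives a unique factorization of $X \to \pi_0^{b\A^1}(X)$ through $X \to \pi_0^b(X)$. This yields a morphism $\alpha:\pi_0^b(X)\to\pi_0^{b\A^1}(X)$.

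Second, $\pi_0^b(X)$ is a birational Nisnevich sheaf by its definition and \cite[Lemma 6.1.2]{am}, so it is $\A^1$-rigid by Remark \ref{rational connectedness remark}(1). The universal property of $\pi_0^{b\A^1}(X)$ then gives $\beta:\pi_0^{b\A^1}(X)\to\pi_0^b(X)$ compatible with the maps from $X$.

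Third, the composites $\beta\alpha$ and $\alpha\beta$ are endomorphisms compatible with the canonical maps from $X$. By uniqueness in the respective universal properties, they equal the identities. Hence $\alpha$ is an isomorphism.

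The main obstacle is making sure the universal property in Remark \ref{birational model remark}(2) really holds. This requires that a birational sheaf $\sF$ be fibrant in the birational model structure, so that
$$Hom_{\sH_b(k)}(U,\sF)=\sF(U),$$
and that any map $X\to\sF$ factor through $\pi_0^b(X)$ on sections. I would verify this as follows. A discrete presheaf is projectively fibrant. It is local with respect to the dense open immersions $V\hookrightarrow U$ exactly because $\sF(U)\to\sF(V)$ is a bijection, which is the definition of birational. It follows that $Hom_{\sH_b(k)}(U,X)\to Hom_{\sH_b(k)}(U,\sF)=\sF(U)$ is well defined, natural in $U$, and gives the factorization. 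Uniqueness holds because $X(U)\to Hom_{\sH_b(k)}(U,X)$ is surjective. To see this, note that $X$ is already local for dense open immersions: by properness and the valuative criterion in codimension one, combined with the Asok--Morel construction, the sections of $\pi_0^b(X)$ over $U$ reduce to those over the generic point. Properness enters only here, and it is what makes the identification with $\pi_0^{b\A^1}(X)$, which is defined only for proper $X$, meaningful.
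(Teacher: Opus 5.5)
Your Steps 1--3 are the paper's proof. Both $\pi_0^b(X)$ and $\pi_0^{b\A^1}(X)$ are initial among maps from $X$ to birational sheaves, with the $\A^1$-rigidity hypothesis redundant by Remark \ref{rational connectedness remark}(1), so they are canonically isomorphic.

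The problem is your last paragraph, where you go beyond the paper to justify the uniqueness half of Remark \ref{birational model remark}(2). The claim that a proper $X$ is local for dense open immersions is false. So is the surjectivity of $X(U)\to Hom_{\sH_b(k)}(U,X)$ that you derive from it. For a counterexample, take the blow-up $\pi\colon X=\mathrm{Bl}_0A\to A$ of an abelian surface at a point.
\begin{itemize}
\item $X$ and $A$ share the dense open subscheme $A\setminus\{0\}$, so $\pi$ is an isomorphism in $\sH_b(k)$.
\item $A$ is a birational sheaf (rational maps from smooth varieties to $A$ extend), hence birationally fibrant. Therefore $Hom_{\sH_b(k)}(A,X)\cong Hom_{\sH_b(k)}(A,A)=A(A)$.
\item Yet $\mathrm{id}_A$ is not of the form $\pi\circ s$ with $s\in X(A)$, because $\pi$ has no section. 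Equivalently, $\pi^{-1}|_{A\setminus\{0\}}\in X(A\setminus\{0\})$ does not extend to $A$.
\end{itemize}
The valuative criterion only extends rational maps from a smooth $U$ across codimension-one points. The statement ``sections over $U$ are determined at the generic point'' is a property of the birational sheaf $\pi_0^b(X)$, not of $X$.

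Uniqueness is in fact formal and needs neither surjectivity nor properness. Let $\sF$ be a discrete, birationally fibrant presheaf and $\mathcal{Y}$ any space. Maps $\mathcal{Y}\to\sF$ in the projective homotopy category are the same as maps of presheaves $\pi_0\mathcal{Y}\to\sF$, where $(\pi_0\mathcal{Y})(U)=\pi_0(\mathcal{Y}(U))$. This holds because cofibrant replacement does not change $\pi_0$, and homotopic maps into a discrete object coincide. By locality of $\sF$, this also computes $Hom_{\sH_b(k)}(\mathcal{Y},\sF)$.

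Apply this to $\mathcal{Y}=X$ and to a birationally fibrant replacement $L_bX$. Since $U$ is cofibrant, $\pi_0(L_bX(U))=Hom_{\sH_b(k)}(U,X)=\pi_0^b(X)(U)$. Because $X\to L_bX$ is an isomorphism in $\sH_b(k)$, precomposition with $X\to\pi_0^b(X)$ is a bijection $Hom_{PSh(Sm/k)}(\pi_0^b(X),\sF)\to Hom_{PSh(Sm/k)}(X,\sF)$.

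This works for any space in place of $X$, so properness does not enter where you say it does. It enters only through the existence of $\pi_0^{b\A^1}(X)$ and its universal property, which Asok--Morel establish for proper $X$ using residue maps from the valuative criterion. With this repair, your argument coincides with the paper's.
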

\begin{proof}
The proof follows because both the sheaves $\pi_0^{b\A^1}(X)$ and $\pi_0^b(X)$ satisfy the same universal property (see Remark \ref{rational connectedness remark} and Remark \ref{birational model remark}).
\end{proof}
A variety $X \in Sm/k$ of dimension $n$ is called rational, if there is a birational map $\P^n_k \dashrightarrow X$. A rational variety has rational curves between any two general points. If it is also proper, then that rational curve can be completed to a morphism from $\P^1$. The following result says that the proper, rational varieties are $\A^1$-chain connected.

\begin{theorem} \label{rational implies connected}
Suppose, $k$ is a field of characteristic zero.  A proper, rational variety $X \in Sm/k$ is $\mathbb{A}^1$-chain connected, so it is $\A^1$-connected. Therefore, if moreover $X$ is a surface and $k$ is algebraically closed, then $X$ is rational if and only if it is $\A^1$-connected.

\end{theorem}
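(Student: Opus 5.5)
Here is the plan. For a proper rational $X \in Sm/k$, I would show that $\pi_0^{ch}(X)(Spec\ F)$ is a singleton for every finitely generated separable extension $F/k$. Then $X$ is weakly $\A^1$-chain connected, and $\A^1$-connectedness follows from the remark after Definition \ref{chain connected}, since $\pi_0^{ch}(X) \to \pi_0^{\A^1}(X)$ is an epimorphism.

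The first step is to note that rationality is preserved by base change. Fix a birational map $\phi:\P^n_k \dashrightarrow X$. It induces a birational map $\P^n_F \dashrightarrow X_F$, which restricts to an isomorphism $V \cong W$ between dense opens $V \subseteq \A^n_F$ and $W \subseteq X_F$.

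The second step connects points of $W(F)$. Given $x,y \in W(F)$, let $a,b \in V(F)$ be the corresponding points, and let $\ell:\A^1_F \to \A^n_F$ be the line $t \mapsto (1-t)a + tb$. The composite $\phi\circ \ell$ is a rational map $\A^1_F \dashrightarrow X_F$ defined on a dense open set. Since $\A^1_F$ is a regular curve and $X$ is proper, the valuative criterion extends it to a morphism $\A^1_F \to X_F$, that is, an element of $X(\A^1_F)$. Its values at $0$ and $1$ are $x$ and $y$, so $x$ and $y$ are chain-equivalent. This is where properness is used.

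The third step, which I expect to be the main obstacle, handles an arbitrary $x \in X(F)$ that may lie outside $W$. The plan is to find an $F$-rational curve through $x$ that meets $W$ in an $F$-point. First I would reduce to the case where $F$ is infinite; since $k$ has characteristic zero, every extension $F/k$ is infinite. Then I would use the fact that $X$ is smooth, so the rational point $x$ admits an open neighbourhood étale over $\A^n_F$, with $\dim X=n$.

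I see two ways to produce the curve. The simplest, and the one I would try first, is the standard fact that for smooth proper $X$, rationality gives $R$-equivalence triviality on smooth $F$-points. Concretely, resolve the indeterminacy of $\phi^{-1}:X\dashrightarrow \P^n$ by blow-ups $\tilde X \to X$ with smooth centres. The point $x$ lifts to $\tilde X(F)$. Each exceptional fibre is a projective bundle, so it is chain connected by lines. This reduces the problem to the images of $F$-points of $\tilde X$ under a birational morphism $\tilde X \to \P^n$. Alternatively, one could invoke the birational invariance of $\pi_0^{ch}$ on field points for smooth proper varieties, which is encoded in the sheaf $\pi_0^{b\A^1}(X)$ of Theorem \ref{rationally connected}: its sections over $Spec\ F$ agree with $\pi_0^{ch}(X)(Spec\ F)$, and it is birational. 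One then compares with $\P^n$, whose chain-connected components over $F$ form a point. This alternative avoids explicit geometry.

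For the final sentence of the statement, the forward direction is the argument above. For the converse, suppose $X$ is an $\A^1$-connected smooth proper surface over an algebraically closed $k$ of characteristic zero. Remark \ref{rational connectedness remark}(3) shows that $X$ is rationally connected. A rationally connected surface is rational by Castelnuovo's criterion, since $q=P_2=0$. Over a general algebraically closed field of characteristic zero, this step uses the Lefschetz principle or the classification of surfaces.
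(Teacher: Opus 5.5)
Your reduction, Step 2, and the surface converse (rational connectedness plus Castelnuovo, as in the paper) are fine. The gap is in Step 3, which carries all of the difficulty. You could legitimately cite the Colliot-Th\'el\`ene--Sansuc birational invariance of $X(F)/R$, which for proper $X$ coincides with $\A^1$-chain equivalence. But that citation would then be the whole proof, and the sketch you give for it does not work.

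Resolving $\phi^{-1}\colon X\dashrightarrow\P^n$ by smooth blow-ups $\pi\colon\tilde X\to X$ does give $\tilde X(F)/\!\sim\;\cong X(F)/\!\sim$. You are then left with a birational \emph{morphism} $\psi\colon\tilde X\to\P^n$. Knowing that $\P^n(F)$ is one chain class says nothing about $\tilde X(F)$ unless $\psi_*$ is injective on classes. If you lift a line through $\psi(\tilde x)$ and a point of the isomorphism locus, the lift passes through \emph{some} point of $\psi^{-1}(\psi(\tilde x))$, not necessarily $\tilde x$. The fibres of $\psi$ need not be projective bundles, so you have no way to connect the two points.

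The repair is to resolve in the other direction.
\begin{itemize}
\item By Hironaka, choose $\rho\colon\tilde P\to\P^n_k$, a composite of blow-ups with smooth centres, such that $\phi\circ\rho$ extends to a morphism $g\colon\tilde P\to X$.
\item A smooth blow-up $Y'\to Y$ with centre $C$ preserves chain-connectedness of $F$-points. A homotopy $\A^1_F\to Y$ not contained in $C$ lifts by the valuative criterion. One contained in $C$ lifts because the pullback of $N_{C/Y}$ to $\A^1_F$ is trivial, so $\P(N_{C/Y})$ has a section over it. Mismatched endpoints lie in a common fibre, which is a point or $\P^{r-1}_F$. Hence $\tilde P(F)$ is a single class.
\item $g$ is surjective on $F$-points because $X$ is smooth. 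This is Nishimura's argument: the generic point of the exceptional divisor of $\mathrm{Bl}_x X_F$ yields an $F(s_1,\dots,s_{n-1})$-point of the proper $F$-scheme $g^{-1}(x)$. Specialising one variable at a time then gives an $F$-point.
\item Since $g$ maps chains to chains, $X(F)$ is one class.
\end{itemize}
This repair uses only resolution of indeterminacies, not weak factorization, and it makes your Step 2 superfluous.

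Your alternative via $\pi_0^{b\A^1}$ also works, but not for the reason you give. That $\pi_0^{b\A^1}(X)$ is a \emph{birational sheaf} concerns restriction along dense opens of the test scheme $U$. It says nothing about invariance of $X\mapsto\pi_0^{b\A^1}(X)$ under birational modification of $X$.

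The missing step is the universal property of Remark \ref{rational connectedness remark}(2). Let $U_0$ be a common dense open of $X$ and $\P^n_k$. For every birational $\A^1$-rigid sheaf $\sF$ one has, naturally in $\sF$, $\mathrm{Hom}(\pi_0^{b\A^1}(X),\sF)=\sF(X)\cong\sF(U_0)\cong\sF(\P^n_k)=\mathrm{Hom}(\pi_0^{b\A^1}(\P^n_k),\sF)$. Yoneda in the category of birational $\A^1$-rigid sheaves then gives $\pi_0^{b\A^1}(X)\cong\pi_0^{b\A^1}(\P^n_k)$. Evaluating at $F$ and using the identification from the proof of Theorem \ref{rationally connected} gives $\pi_0^{ch}(X)(F)\cong\pi_0^{ch}(\P^n)(F)=*$.

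With this step supplied, your route genuinely differs from the paper's. The paper uses weak factorization to connect $X$ to $\P^n$ by smooth blow-ups and blow-downs, and shows that $\A^1$-chain connectedness is unchanged by a single smooth blow-up in both directions. Your route is formal once $\pi_0^{b\A^1}$ is available; all the geometry sits in the construction of its residue maps.
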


\begin{proof}
This is proved in \cite[Theorem 2.3.6]{am} (also \cite[Corollary 2.4.7]{am}). 
The weak factorization holds in characteristic zero, that is a rational, proper variety is obtained by a sequence of blow-ups and blow-downs at a smooth center from $\mathbb{P}^n$.
Finally the proof follows because if $\pi: Y^\prime \to Y$ is the blow-up at a smooth center,
then $Y$ is $\A^1$-chain connected if and only if $Y^\prime$ is $\A^1$-chain connected. 

The other assertion follows because smooth $\A^1$-connected proper varieties are rationally connected and in dimension $2$ over an algebraically closed field of characteristic $0$, the rational connectedness is equivalent to being rational.

\end{proof}
\begin{remark}
\begin{enumerate}
\item Like the rational connectedness, the property of being $\A^1$-connected is a birational invariant for smooth proper varieties over the field of characteristic $0$ \cite[Corollary 2.4.6]{am}.
\item Theorem \ref{rational implies connected} is more generally true. A retract rational (a variety is called retract rational if it contains an open subset, which is a retract of some open subset of the affine space), proper variety $X$ over any field is weakly $\A^1$-chain connected (see \cite[Theorem 2.3.6]{am} and \cite[Theorem 8.5.1, Theorem 8.6.2]{ks}). Moreover if $k$ is an infinite field, then the $\A^1$-chain connected component sheaf $\pi_0^{ch}(X)$ is the trivial sheaf \cite[Theorem 1.1]{retract}.
\end{enumerate}
\end{remark}
\begin{remark}
As we have discussed earlier (Remark \ref{rational connectedness remark}), in higher dimensions it is not known whether an $\A^1$-connected, smooth, proper variety is rational. If $X \in Sm/k$ is $\A^1$-connected, then the unramified cohomology groups $H^i_{ur}(X, \mu_n^{\otimes l})$ vanish, for $i \geq 1$ \cite[Lemma 4.7]{asok homology}. Thus in particular, an $\A^1$-connected smooth, proper variety has trivial Brauer group.
\end{remark}
Here we recall several classes of varieties $X$ with lots of rational curves or affine lines in $X$. By an affine line in $X$ (or a rational curve), we mean that there is a non-constant morphism $\A^1_k \to X$ (or there is a non-constant rational map $\A^1_k \dashrightarrow X$).
\begin{definition} (\cite[Definition 1.1, Chapter IV]{kollar}, \cite[Definition 1]{at})
Suppose, $X \in Sm/k$ is a variety over an algebraically closed field of characteristic zero.
\begin{enumerate}
\item The variety $X$ is called $\A^1$-uniruled or log-uniruled (or simply uniruled, if $X$ is proper) if there is a dominant generically finite morphism $\A^1_Y \to X$, for some $k$-variety $Y$.
\item The variety $X$ is called $\A^1$-ruled (or $\A^1$-cylindrical) if there is an open subset $U$ of $X$ which is isomorphic to $\A^1_k \times_k Y$, for some $k$-variety $Y$.
\item The variety $X$ is called ruled if it is birational to $\A^1_k \times_k Y$, for some $k$-variety $Y$.
\end{enumerate}
\end{definition}
\begin{remark}
\begin{enumerate}
\item An $\A^1$-ruled variety is also $\A^1$-uniruled and an $\A^1$-uniruled variety has negative logarithmic Kodaira dimension.
\item An $\A^1$-ruled variety is also ruled. However, in general whether a smooth rational projective variety contains an $\A^1$-cylinder is not known (see \cite{at}, \cite{kishimoto}).
\end{enumerate}
\end{remark}

Now we recall the universal $\A^1$-invariant sheaf $\sL(\sF)$, associated to a sheaf $\sF$, introduced by Balwe-Hogadi-Sawant in \cite{bhs}, which is geometric in nature and is related to the $\A^1$-connected component $\pi_0^{\A^1}(\sF)$.
\begin{definition} \cite[Definition 2.9]{bhs} \label{geometric sheaf}
Suppose, $\sF$ is a Nisnevich sheaf of sets on $Sm/k$. The sheaf $\sS(\sF)$ is defined to be the Nisnevich sheafification of the presheaf of sets $\sS^{\text{pre}}(\sF)$ on $Sm/k$, defined as
$$\sS^{\text{pre}}(\sF)(U)= \sF(U)/\sim, \text{ for }U \in Sm/k,$$
where $\sF(U)/\sim$ is the quotient associated to the equivalence relation $\sim$ on $\sF(U)$, generated by the naive $\A^1$-homotopies (see Definition \ref{chain connected}).

If $n >1$ is an integer, then the sheaf $\sS^n(\sF)$ is defined inductively as $\sS^n(\sF) := \sS(\sS^{n-1}(\sF))$ and $\sS^0(\sF):= \sF$. The quotient map associated to the equivalence relation $\sim$ induces an epimorphism of Nisnevich sheaves
$$\theta: \sF \to \sS(\sF).$$
The sheaf $\sL(\sF)$ is defined to the filtered colimit
$$\sL(\sF):= \varinjlim_{n \geq 0} \sS^n(\sF).$$
There is an canonical epimorphism $\sF \to \sL(\sF).$
\end{definition}
\begin{remark}
\begin{enumerate}
\item If $X \in Sm/k$, then $\sS(X)$ is the $\A^1$-chain connected component sheaf $\pi_0^{ch}(X)$ (Definition \ref{chain connected}, \cite[Remark 2.11]{bhs}).
\item For a Nisnevich sheaf $\sF$, the canonical epimorphism $\sF \to \pi_0^{\A^1}(\sF)$ factors through the canonical epimorphism $\sF \to \sS(\sF)$.
\item The sheaf $\sL(\sF)$ is $\A^1$-rigid \cite[Theorem 2.13]{bhs}. The sheaf $\sL(\sF)$ satisfies the same universal property as $\pi_0^{\A^1}(\sF)$ \cite[Remark 2.15]{bhs}: a morphism $\sF \to \sG$ to an $\A^1$-rigid sheaf $\sG$ uniquely factors through the canonical morphism $\sF \to \sL(\sF)$. This induces the canonical epimorphism $\pi_0^{\A^1}(\sF) \to \sL(\sF)$, which is an isomorphism if and only if $\pi_0^{\A^1}(\sF)$ is $\A^1$-rigid \cite[Corollary 2.18]{bhs}.
\end{enumerate}
\end{remark}
\begin{proposition}
For any space $\mathcal{X}$, the morphism $\pi_0^{\mathbb{A}^1}(\mathcal{X}) \to \mathcal{L}( \pi_0^{\mathbb{A}^1}(\mathcal{X}))$ is a bijection over the section $Spec \ F$, for any finitely generated separable field extension $F/k$.
\end{proposition}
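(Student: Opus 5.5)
We show that the canonical map $\pi_0^{\A^1}(\sX)(Spec\ F) \to \sL(\pi_0^{\A^1}(\sX))(Spec\ F)$ is both surjective and injective. Write $\sF := \pi_0^{\A^1}(\sX)$. Two facts do the work: $\sF$ is $\A^1$-rigid in one variable (Theorem \ref{collection of A1-invariance results}(1)), and sections of Nisnevich sheaves over $Spec\ F$ agree with presheaf sections, since $Spec\ F$ is Hensel local.

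\textbf{Surjectivity.} The map $\sF \to \sL(\sF)$ is an epimorphism of Nisnevich sheaves. Evaluation at a Hensel local scheme such as $Spec\ F$ is a stalk functor, so an epimorphism is surjective on $F$-points. Hence $\sF(F) \to \sL(\sF)(F)$ is surjective.

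\textbf{Injectivity.} First consider a single step. Because $Spec\ F$ is Hensel local,
$$\sS(\sF)(Spec\ F) = \sS^{\text{pre}}(\sF)(Spec\ F) = \sF(F)/\sim,$$
where $\sim$ is generated by naive homotopies $h \in \sF(\A^1_F)$. By $\A^1$-rigidity in one variable, every such $h$ is pulled back from $Spec\ F$. Therefore $h(0) = h(1)$, the relation $\sim$ is trivial, and $\sF(F) \to \sS(\sF)(F)$ is a bijection.

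To iterate, we need $\sS(\sF)$, and inductively each $\sS^n(\sF)$, to again be $\A^1$-rigid in one variable. I would argue this as follows.
\begin{enumerate}
\item Let $F'/k$ be a finitely generated separable extension. Since $\A^1_{F'}$ is a one-dimensional regular scheme, the argument of Theorem \ref{collection of A1-invariance results}(1) (Brown--Gersten surjectivity plus Noetherian induction) should give that $\sF(\A^1_{F'}) \to \sS(\sF)(\A^1_{F'})$ is surjective.
\item Given this, every element of $\sS(\sF)(\A^1_{F'})$ lifts to some $h \in \sF(\A^1_{F'})$.
\item That lift $h$ is constant by rigidity of $\sF$ in one variable, so its image is constant as well.
\item Together with the bijectivity already established over fields, this shows that $\sS(\sF)$ is $\A^1$-rigid in one variable.
\end{enumerate}
Induction then gives $\sS^n(\sF)(F) \cong \sF(F)$ for every $n$.

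\textbf{Passing to the colimit.} Filtered colimits of sheaves commute with stalks, so
$$\sL(\sF)(F) = \varinjlim_n \sS^n(\sF)(F) \cong \sF(F).$$

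\textbf{Main obstacle.} The hard step is item 1: surjectivity of $\sF \to \sS(\sF)$ on sections over $\A^1_{F'}$, which are not stalks. For this I would try to exploit the fact that $\sF$ itself is a $\pi_0$ of an $\A^1$-fibrant model. Concretely, I would lift a section of $\sS(\sF)$ over $\A^1_{F'}$, given locally on a Nisnevich cover, to a section of $Sing_*(\sF)$ glued along elementary distinguished squares. The point is that chain homotopy classes of $\sF$ are themselves $\pi_0$ of a space satisfying the Brown--Gersten property, and one-dimensional Noetherian induction then applies as in \cite[Corollary 3.2]{u}.

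If this fails, the fallback is to avoid lifting altogether and prove rigidity in one variable of $\sS(\sF)$ directly. Since $\sS(\sF)$ is a quotient of $\sF$, a section over $\A^1_{F'}$ is determined on a Nisnevich cover. I would then check its constancy at the generic point and at closed points using the bijectivity over fields established above.
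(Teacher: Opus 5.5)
Your surjectivity argument, the first injectivity step ($\sF(F)\cong\sS(\sF)(F)$ from one-variable rigidity of $\sF$), and the passage to the colimit are all fine. The paper gives no argument of its own here; it only cites \cite{bhs}, \cite{brs}, \cite{cb}. The gap is exactly the step you flag, and neither of your proposals fills it.

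\textbf{Why item 1 is unsupported.} It does not follow from ``the argument of Theorem \ref{collection of A1-invariance results}(1)''. That argument is Brown--Gersten surjectivity for $\pi_0$ of a \emph{Nisnevich-fibrant} simplicial presheaf. By contrast, $\sS(\sF)$ is the sheafified $\pi_0$ of $Sing_*(\sF)$. The sheaf property only makes $Sing_*(\sF)$ send elementary distinguished squares to strict pullbacks, and these need not be homotopy pullbacks. Replacing $Sing_*(\sF)$ by a fibrant model changes the $\pi_0$-presheaf, so no lift to $\sF(\A^1_{F'})$ comes out. In particular, your claim that chain-homotopy classes of $\sF$ are $\pi_0$ of a space with the Brown--Gersten property is false as stated.

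\textbf{Why the induction and the fallback fail.} Even granting $n=1$, your justification rests on $\sF$ being $\pi_0$ of an $\A^1$-fibrant model. The sheaves $\sS(\sF),\sS^2(\sF),\dots$ are not of that form, so ``induction then gives'' has nothing to run on. The fallback also fails. A section of a Nisnevich sheaf over $\A^1_{F'}$ is determined by its stalks at the Henselizations $\mathcal{O}^h_{\A^1_{F'},x}$, which are Henselian DVRs at closed points. It is not determined by its values at residue fields. Since $\sS(\sF)$ is not known to be unramified, bijectivity over fields says nothing about how the generic value specializes to the values at $0$ and $1$.

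\textbf{The missing idea.} Always lift to $\pi_0$ of a fixed $\A^1$-fibrant model $\mathcal{Y}$ of $\sX$, never to $\sS^{n-1}(\sF)$. Then $\sF=a_{Nis}\pi_0(\mathcal{Y}(-))$ and $\pi_0(\mathcal{Y}(\A^1_{F'}))=\pi_0(\mathcal{Y}(F'))$. The overlaps in the curve-gluing are fields, where the previous stage already gives agreement. Concretely, prove by induction on $n$ two statements:
\begin{enumerate}
\item[(I$_n$)] $\sF(K)\to\sS^n(\sF)(K)$ is bijective for all fields $K$, including separable algebraic colimits such as $\mathrm{Frac}(\mathcal{O}^h_{C,x})$;
\item[(S$_n$)] $\pi_0(\mathcal{Y}(C))\to\sS^n(\sF)(C)$ is surjective for every essentially smooth $C$ of dimension $\le 1$, Henselian DVRs included.
\end{enumerate}
The base case (S$_0$) is Theorem \ref{collection of A1-invariance results}(1). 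Applying (S$_n$) to $C=\A^1_{K}$ shows every section of $\sS^n(\sF)$ over $\A^1_K$ comes from $\pi_0(\mathcal{Y}(K))$. Hence $\sS^n(\sF)$ is one-variable rigid, which gives (I$_{n+1}$).

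\textbf{The inductive step (S$_{n+1}$).} Over a Henselian DVR $R$ one has $\sS^{n+1}(\sF)(R)=\sS^n(\sF)(R)/\sim$, so (S$_n$) suffices. For a curve $C$, proceed as follows:
\begin{enumerate}
\item Lift the generic value using (I$_{n+1}$) and spread it to a dense open $C_0$.
\item Lift at each of the finitely many remaining points $x$ using the DVR case.
\item The two lifts agree in $\pi_0(\mathcal{Y}(\mathrm{Frac}\,\mathcal{O}^h_{C,x}))=\sF(\mathrm{Frac}\,\mathcal{O}^h_{C,x})$. This holds because both map to the same element of $\sS^{n+1}(\sF)$ there, and (I$_{n+1}$) holds at that field.
\item Glue them using the Brown--Gersten property of $\mathcal{Y}$ on an elementary distinguished square.
\end{enumerate}
With (S$_n$) established this way, carrying the Henselian DVR case along, your items 2--4 go through verbatim. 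Without it, the injectivity half of your argument is not proved.
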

\begin{proof}
See \cite[Remark 2.15]{bhs}, \cite[Corollary 2.2]{brs} (see also \cite[Corollary 2.15]{cb}).
\end{proof}
\begin{definition} \label{ghost homotopy definition}
Suppose, $X \in Sm/k$.
\begin{enumerate}
\item An element $H \in \sS(X)(\A^1_U)$, for some $U \in Sm/k$ (called an $\A^1$-ghost homotopy \cite[Definition 3.2]{bhs}) is given by a Nisnevich covering $f: V \to \A^1_U$ along with a morphism $\phi: V \to X$ such that there exists a Nisnevich covering $V^\prime \to V \times_{\A^1_U} V$ along with a chain of naive $\A^1$-homotopies $H_1, \dots, H_m: \A^1_{V^\prime} \to X$ such that 
$$H_1(0) = \phi \circ  \text{pr}_1|_{V^\prime}  \text{ and }H_m(1)= \phi \circ  \text{pr}_2|_{V^\prime},$$ 
where $\text{pr}_1, \text{pr}_2: V \times_{\A^1_U} V \to V$ are the projection maps and the elements $H_i(0), H_i(1): V^\prime \to X$ are induced by the $0$-section and the $1$-section $V^\prime \to \A^1_{V^\prime}$ respectively, . If moreover, the element 
$$H \in \sS(X)(\A^1_U) \text{ satisfies that } H(0) \neq H(1) \in \sS(X)(U),$$
then we can assume that each $H_i: \A^1_{V^\prime} \to X$ also satisfies that $H_i(0) \neq H_i(1): V^\prime \to X$, for every $i$ (see also \cite[Remark 2.6]{choudhury2}).

Two elements $\alpha, \beta: U \to X$ are $\A^1$-ghost homotopic if there is such $H \in \sS(X)(\A^1_U)$ along with the lifts $\Tilde{i_0}, \Tilde{i_1}: U \to V$ of the $0$-section, $1$-section $i_0, i_1: U \to \A^1_U$ respectively (that is, $f \circ \Tilde{i_0} = i_0$ and $f \circ \Tilde{i_1}=i_1$) such $\theta \circ \phi \circ \Tilde{i_0} = \theta \circ \alpha$ and $\theta \circ \phi \circ \Tilde{i_1} = \theta \circ \beta$, where $\theta: X \to \sS(X)$ is the canonical epimorphism. 
\item A finite collection of homotopies $H_1, \dots, H_m: \A^1_W \to X$, for some $W \in Sm/k$ is called a chain if they satisfy 
$$H_i(1)=H_{i+1}(0): W \to X, \text{ for every }i.$$
\end{enumerate}

\end{definition}

\begin{theorem} \label{proper surface rigid}
Suppose, $X \in Sm/k$ is a smooth proper surface, over an algebraically closed field of characteristic $0$.
Then $\pi_0^{\mathbb{A}^1} (X)$ is $\mathbb{A}^1$-rigid.
\end{theorem}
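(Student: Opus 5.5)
The plan is to show that the canonical epimorphism $\pi_0^{\A^1}(X) \to \sL(X)$ is an isomorphism. By \cite[Corollary 2.18]{bhs} this is equivalent to $\A^1$-rigidity of $\pi_0^{\A^1}(X)$. Since $\sL(X)$ is the colimit of the $\sS^n(X)$, it suffices to prove that the iteration stabilizes at the first step, namely that $\theta: \sS(X) \to \sS^2(X)$ is an isomorphism. Then $\sL(X) = \sS(X)$ is $\A^1$-rigid, and the epimorphism $\sS(X)=\pi_0^{ch}(X) \to \pi_0^{\A^1}(X)$ combined with $\pi_0^{\A^1}(X) \to \sL(X) = \sS(X)$ forces both maps to be isomorphisms. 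I would split according to the birational type of $X$, following the Enriques--Kodaira classification over an algebraically closed field of characteristic $0$.

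\emph{Case 1: $X$ is not uniruled.} Then $X$ contains no rational curves, so every naive $\A^1$-homotopy $\A^1_U \to X$ is constant along the fibres. It therefore factors through $U$, by the argument of Theorem \ref{collection of A1-invariance results}(2) applied on points together with the density of closed points. Hence $X$ itself is $\A^1$-rigid, $\sS(X) = X$, and $\pi_0^{\A^1}(X) \cong X$ is rigid. This is the route of \cite[Corollary 3.15]{bhs}.

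\emph{Case 2: $X$ is rational.} By Theorem \ref{rational implies connected}, $X$ is $\A^1$-chain connected, so $\pi_0^{\A^1}(X)$ is trivial and in particular rigid. More precisely, I would check that $\sS(X)$ is the trivial sheaf on all of $Sm/k$, not only on fields. Any $U$-point of $X$ can locally (Nisnevich) be joined to a fixed point by blowing up and using the rational curves through the generic point. I expect this step to reduce to the birational invariance of $\sS(X)$ on Henselian local schemes.

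\emph{Case 3: $X$ is birationally ruled over a curve $C$ of genus $g \geq 1$.} Here the Albanese/ruling map gives $p: X \to C$, where $C$ is $\A^1$-rigid and every fibre is a tree of rational curves. I would show that $\sS(X) \to C$ is an isomorphism. Surjectivity onto Nisnevich-local sections follows because $p$ has sections Zariski-locally over the base, by Tsen's theorem and properness. For injectivity, take two $U$-points with the same image in $C$, where $U$ is Henselian local. They lie in the same fibre of $X_U \to U$, which is a conic bundle degenerating to chains of $\P^1$'s. After a sequence of blow-ups and blow-downs relative to $C$, I would connect them by naive $\A^1$-homotopies over $U$. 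Then $\sS(X)\cong C$ is a rigid scheme, hence $\sS^2(X)=\sS(X)$, and we conclude as above. This is \cite[Theorem 1.2]{bs}.

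The main obstacle is the injectivity in Case 3 over non-field bases $U$. Over fields, Theorem \ref{rationally connected} already identifies $\pi_0^{\A^1}$ with $\pi_0^{ch}$. Over a Henselian local $U$ of dimension $2$, however, the two points may meet singular fibres of the relative minimal model, and the chains of $\A^1$'s must be constructed in families. My first attempt would be the almost-proper-sheaf machinery of \cite[Theorem 3.9]{bhs}: extend a ghost homotopy to a compactification and lift it to a genuine chain of naive homotopies. The dimension-$\leq 2$ restriction in that theorem is exactly where the surface hypothesis is used.
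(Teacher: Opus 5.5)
\textbf{There is a genuine gap in Case 3.} Your reduction to ``$\sS(X)\to\sS^2(X)$ is an isomorphism'' fails there, and the problem is not merely technical.

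The claim $\sS(X)\cong C$ holds when $X$ is a minimal model, and that part agrees with the paper. It is false as soon as $X$ is not minimal. As the paper records, for a non-minimal birationally ruled surface over a curve of genus $g>0$ one has $\sS(X)\ncong\sS^2(X)$ \cite[Theorem 1.5]{bs}, and $\sS(X)\to\pi_0^{\A^1}(X)$ is not an isomorphism \cite[Theorem 2]{naiveruled}. Either fact rules out your plan: if $\sS(X)\to\sS^2(X)$ were an isomorphism, your own composite argument would make $\sS(X)\to\pi_0^{\A^1}(X)$ an isomorphism.

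You can see directly that $\sS(X)\to C$ is not injective. Let $X$ be the blow-up of $C\times\P^1$ at a point over $c_0$, and let $q=E\cap F'$ be the node of the fibre over $c_0$. Let $U=\operatorname{Spec}\mathcal{O}^h_{X,q}$, with closed point $u_0$, tautological point $\iota\colon U\to X$, and parameters $u,y$ such that $p=uy$, $E=\{y=0\}$ and $F'=\{u=0\}$.
\begin{itemize}
\item Since $C$ is $\A^1$-rigid, every naive homotopy starting at a $U$-point over $p\circ\iota$ stays over $p\circ\iota$.
\item By unique factorization, a $U$-point over $p\circ\iota$ sending $u_0$ to $q$ pulls $(u,y)$ back to $(\epsilon u,\epsilon^{-1}y)$ or to $(\epsilon y,\epsilon^{-1}u)$, with $\epsilon$ a unit.
\item Start a homotopy at a point of the first kind. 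Irreducibility of $\A^1$ times each branch of $\{uy=0\}\subset U$ forces $\A^1_{\{y=0\}}$ into $E$ and $\A^1_{\{u=0\}}$ into $F'$. Hence $\A^1_{u_0}$ goes to $q$, and the endpoint is again of the first kind.
\end{itemize}
So $\iota$ is not chain homotopic to $s\circ p\circ\iota$, where $s$ is a section of $p$ missing $q$, although both have the same image in $C(U)$.

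This is also why \cite[Theorem 3.9]{bhs} cannot rescue the step you flagged as the main obstacle. That result controls field-valued points, which is why $\sS(X)$ and $\pi_0^{\A^1}(X)$ agree on fields (Theorem \ref{rationally connected}) and on $1$-dimensional Henselian local schemes \cite[Theorem 1]{naiveruled}. Over $2$-dimensional Henselian bases the chains you want do not exist.

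The paper avoids this by letting the iteration stabilize one step later. In the non-minimal case it uses \cite[Theorem 1.5]{bs}, which gives $\pi_0^{\A^1}(X)\cong\sS^n(X)$ for all $n\geq 2$. Hence $\pi_0^{\A^1}(X)\cong\sL(X)\cong\sS^2(X)$ is $\A^1$-rigid. Even knowing $\sS^2(X)\cong\sS^3(X)$ would not give $\pi_0^{\A^1}(X)\cong\sL(X)$ through your epimorphism trick. That trick used the map $\sS(X)\to\pi_0^{\A^1}(X)$, and no analogous map out of $\sS^2(X)$ is available a priori. The identification is part of the content of \cite{bs}.

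\textbf{Case 1 also rests on a false claim.} A non-uniruled surface can contain rational curves: any non-minimal surface of non-negative Kodaira dimension contains a $(-1)$-curve, and K3 surfaces contain rational curves. So $X$ need not be $\A^1$-rigid, and in general $\sS(X)\neq X$. The correct input, which the paper uses, is \cite[Theorem 3.14]{bhs}: $\sS(X)\to\sS^2(X)$ is an isomorphism for non-uniruled $X$. Your conclusion in that case survives through the citation, not through the argument you give.

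\textbf{Case 2 is fine.} It needs nothing beyond Theorem \ref{rational implies connected}, since $\pi_0^{\A^1}(X)$ is then the point sheaf.
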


\begin{proof}

If $X \in Sm/k$ is non-uniruled proper surface over any field $k$, then the map $\sS(X) \to \sS^2(X)$ is an isomorphism \cite[Theorem 3.14]{bhs}. Thus, in this case $\sS^n(X)$ stabilizes at $n=1$ and $\sL(X) \cong \sS(X)$. So we have
$$\pi_0^{\A^1}(X) \cong \sS(X) \cong \sL(X), \text{ is }\A^1\text{-rigid}.$$
Let $k$ be an algebraically closed field of characteristic $0$. If $X$ is a projective surface over $k$, 
then the morphism
$$\sS(X)(U) \to \pi_0^{\A^1}(X)(U),$$
is an isomorphism, for Henselian local $k$-scheme $U$ of dimension $1$ \cite[Theorem 1]{naiveruled}. 
If $X$ is rational, proper surface over any infinite field $k$, then $\sS(X) \cong \pi_0^{\A^1}(X) \cong Spec \ k$.
If $X \in Sm/k$, 
is a proper, birationally ruled surface over a curve $C$ of genus $g>0$, then by \cite[Theorem 1.5]{bs}
\begin{enumerate}
\item If $X$ is its minimal model, then $\pi_0^{\A^1}(X) \cong \sS^n(X) \cong C$, for every $n \geq 1$. So $\pi_0^{\A^1}(X) \cong \sL(X) \cong \sS(X)$ is $\A^1$-rigid.
\item If $X$ is not a minimal model, then $\pi_0^{\A^1}(X) \cong \sS^n(X)$, for $n \geq 2$ and $\sS(X)\ncong \sS^2(X)$. So $\pi_0^{\A^1}(X) \cong \sL(X) \cong \sS^2(X)$ is $\A^1$-rigid.
\end{enumerate}
\end{proof}
\begin{remark} \label{non local}
For $X \in Sm/k$, the $\A^1$-locality of $Sing_*(X)$ \cite[Definition A.2.1]{am} is an useful condition to understand the $\A^1$-fibrant replacement functor geometrically. If $Sing_*(X)$ is $\A^1$-local, then the natural map $\sS(X) \to \pi_0^{\A^1}(X)$ is an isomorphism \cite[Remark 2.2.9]{am}.
\begin{enumerate}
\item If $X \in Sm/k$, where $k$ is an algebraically closed field of characteristic $0$, is proper, birationally ruled surface over a curve $C$ of genus $g>0$, which is not a mininal model, then the map $\sS(X) \to \pi_0^{\A^1}(X)$ is not an ismorphism, in particular $Sing_*(X)$ is not $\A^1$-local \cite[Theorem 2]{naiveruled}.
\item If $X \in Sm/k$ is an affine surface over an algebraically closed field of characteristic $0$, which admits an $\A^1$-fibration over a smooth $\A^1$-rigid curve, then $Sing_*(X)$ is $\A^1$-local \cite[Corollary 4.41]{aritra}.
\end{enumerate}
\end{remark}
Theorem \ref{rationally connected} implies that if $X$ is a smooth, proper variety which is also $\A^1$-connected, then $X$ is rationally connected. In particular, $X$ is $\A^1$-uniruled. The same assertion is true, for any $\A^1$-connected smooth variety.
\begin{theorem} \label{uniruledness generality}
Let $X$ be a smooth $\mathbb{A}^1$-connected $k$-variety. Then there exists a dominant morphism $H: \mathbb{A}^1_Y \to X$, for some $Y \in Sm/k$ with $dim(Y) = dim(X) -1$. 
\end{theorem}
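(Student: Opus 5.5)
The plan is to use the fact that $\A^1$-connectedness forces the generic point of $X$ to be joined to a rational point by a chain of ghost homotopies. We then extract from the first step of that chain an honest naive homotopy $\A^1_V \to X$ that is non-constant in the $\A^1$-direction and whose source $V$ maps dominantly to $X$.

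First I reduce to the generic point. Let $\eta = Spec\, F$ with $F = k(X)$, and let $\xi \in X(F)$ be the generic point. Since $X$ is $\A^1$-connected, $\pi_0^{\A^1}(X)(F)$ is a singleton. So $\xi$ and a constant point $x_0 \in X(k) \subseteq X(F)$ (which exists because $k$ is algebraically closed) have the same image in $\pi_0^{\A^1}(X)(F)$. By the canonical epimorphism $X \to \sL(X)$ and the universal property of $\sL$ (Remark on $\sL$), the images of $\xi$ and $x_0$ in $\sL(X)(F) = \varinjlim_n \sS^n(X)(F)$ coincide. Hence they agree in $\sS^n(X)(F)$ for some $n$.

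Next I argue by induction on $n$, working with spreadings over Henselizations and field points. An equality in $\sS^n(X)(F)$ is witnessed by a chain of naive $\A^1$-homotopies of sections of $\sS^{n-1}(X)$ over $F$. Each such section $\A^1_F \to \sS^{n-1}(X)$ is, after a Nisnevich cover $V \to \A^1_F$, an $\A^1$-ghost homotopy as in Definition \ref{ghost homotopy definition}. Since $\xi \neq x_0$ in $X(F)$, some step in this tree of data must be a genuine naive homotopy $H_i : \A^1_{W} \to X$ with $H_i(0) \neq H_i(1)$. Moreover, by the remark in Definition \ref{ghost homotopy definition}, we may choose all $H_i$ with $H_i(0)\neq H_i(1)$. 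Tracing from $\xi$, the first such homotopy has $H_1(0)$ equal to (a lift of) $\xi$ composed with $W \to Spec\, F$ or with an étale neighbourhood of it. Here $W$ is an essentially smooth $F$-scheme, obtained from Nisnevich covers of $\A^1_F$, its fibre products, and further covers. Spreading out, $W$ becomes a smooth $k$-variety $W_0$ with a map $W_0 \to X$ given by $H_1(0)$. That map is dominant, since it factors generically through $\xi$ on a component. I then get $H_1: \A^1_{W_0} \to X$ whose restriction to $0 \times W_0$ is dominant.

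Finally I cut down the dimension. Consider $\Phi = H_1 : \A^1_{W_0} \to X$. Its image contains the dominant image of $0\times W_0$, and for a general point $w$ the curve $t \mapsto H_1(t,w)$ is non-constant, because $H_1(0)\neq H_1(1)$ on a dense open set. Choose a general subvariety $Y \subseteq W_0$, smooth after shrinking, such that $H_1(0)|_Y : Y \to X$ is generically finite onto a divisor $D$ that meets the general image curves transversally. Such a $Y$ can be taken as a general complete-intersection slice mapping onto a general hypersurface section of $X$ through image points. Then $\A^1_Y \to X$ has image containing a family of non-constant curves through the points of $D$, with those curves not contained in $D$. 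By dimension count this map is dominant, and $\dim Y = \dim X - 1$.

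The main obstacle is the middle step: ensuring that the first non-trivial homotopy in the iterated ghost-homotopy data actually starts at a dominant point. The difficulty is that for $n \geq 2$ the witness lives on Nisnevich covers of $\A^1_F$ rather than on $F$ itself. I would handle this by specializing along the lift $\tilde i_0$ of the $0$-section. This is a section over $F$ of the cover, so it lands back at a field point mapping to $\xi$. That lets me restrict the ghost homotopy's local data to a neighbourhood of that section, where the relevant map to $X$ still hits the generic point.
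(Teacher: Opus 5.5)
Your reduction to the generic point via $\pi_0^{\A^1}(X)\to\sL(X)$ and $\sL(X)(F)=\varinjlim_n \sS^n(X)(F)$ is correct. Your final slicing step is acceptable in outline. There is, however, a genuine gap in the middle step, which is where all the content lies, and the remedy you sketch does not work.

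The first non-trivial homotopy in the ghost data does not start at ``$\xi$ composed with $W\to Spec\ F$''. It starts at $\phi\circ\text{pr}_1|_W$, where $\phi:U\to X$ is a local lift of $\theta:\A^1_F\to\sS^{n-1}(X)$ on a Nisnevich cover $U\to\A^1_F$. On a given component of $W$, nothing forces this map to be dominant. Specializing along $\widetilde{i_0}$ does not help. The point $\phi(\widetilde{i_0})$ agrees with $\xi$ only after passing to $\sS^{n-1}(X)(F)$, not in $X(F)$. It can therefore be an $F$-point not lying over the generic point of $X$, and then $\phi$ on the component of $U$ through $\widetilde{i_0}$ need not be dominant. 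Even when some component $U_j$ is dominant, other components $U_i$ may map into proper closed subsets. Non-trivial homotopies on components of $W$ lying over $U_i\times_{\A^1_F}U_{i'}$ then start at non-dominant maps.

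What is missing is the component analysis the paper carries out in the proof of Theorem \ref{generic point ghost homotopy}, which follows the cited proof of this statement. It runs as follows.
\begin{enumerate}
\item If the lift at $0$ differs from $\xi$ in $X(F)$, drop to a lower level.
\item Otherwise fix a dominant component $U_j$ and take a component $W_0$ of $W$ lying over some $U_j\times_{\A^1_F}U_i$. If some $U_i$ is non-dominant, choose $W_0$ over such a $U_i$; then $\phi\circ\text{pr}_1\neq\phi\circ\text{pr}_2$ on $W_0$, so some $G_t$ is non-trivial there. If all components are dominant, some $G_t$ is non-trivial on some component, since otherwise $\phi$ descends to a map $\A^1_F\to X$ and $\theta(0)=\theta(1)$.
\item The first non-trivial $G_t$ on $W_0$ then starts at $\phi\circ\text{pr}_1|_{W_0}$, which is dominant because $W_0\to U_j$ is \'etale.
\end{enumerate}

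For $n\geq 3$ this must be organized as a real induction, and the induction on $n$ you announce is never formulated. Since the recursion passes to function fields of components of the covers, the hypothesis has to quantify over all finitely generated $K/k$. For instance: if no dominant $H:\A^1_V\to X$ with $H(0)\neq H(1)$ exists, then for every such $K$ and every $\alpha\in X(K)$ lying over the generic point, $\alpha$ is the only point of $X(K)$ in its class in $\sS^n(X)(K)$.

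With this hypothesis the inductive step goes through.
\begin{enumerate}
\item The lift at $0$ equals $\alpha$, so its component is dominant.
\item Comparing $\phi\circ\text{pr}_1$ and $\phi\circ\text{pr}_2$ at the generic points of $U\times_{\A^1_K}U$ forces them to agree.
\item Hence $\phi$ descends to $\bar{\gamma}:\A^1_K\to X$ with $\bar{\gamma}(0)=\alpha$.
\item Then $\bar{\gamma}(1)=\alpha$, because spreading out $\bar{\gamma}$ would otherwise give a dominant $H$ with $H(0)\neq H(1)$. So $\theta(1)=\theta(0)$ in $\sS^{n-1}(X)(K)$.
\end{enumerate}
Applying this to $\alpha=\xi$ and $x_0$ contradicts $\A^1$-connectedness.
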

\begin{proof}
This is proved in \cite[Theorem 2.11]{choudhury2}.

\end{proof}
\begin{corollary} \cite[Proposition 11.4]{iitaka} \label{connected implies negative}
Let $X$ be a smooth, $\mathbb{A}^1$-connected variety over an algebraically closed field of characteristic $0$. Then $X$ has negative logarithmic Kodaira dimension.
\end{corollary}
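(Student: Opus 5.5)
The plan is to feed the dominant morphism from Theorem \ref{uniruledness generality} into Iitaka's criterion. That theorem supplies, for the smooth $\A^1$-connected variety $X$, a dominant morphism $H:\A^1_Y \to X$ with $Y \in Sm/k$ and $\dim(Y)=\dim(X)-1$. Since $\dim \A^1_Y=\dim X$ and $H$ is dominant, $H$ is generically finite. Hence $X$ is $\A^1$-uniruled (log-uniruled) in the sense of the definition above.

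Next I pass from log-uniruledness to negative logarithmic Kodaira dimension. I would fix a smooth projective compactification $\bar X$ of $X$ with simple normal crossing boundary $D$, which exists in characteristic $0$ by Hironaka. Then I would argue that if $\bar\kappa(X)\ge 0$, there is a nonzero section $\omega$ of $m(K_{\bar X}+D)$ for some $m>0$, i.e.\ a log pluricanonical form with at most log poles along $D$.

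The main step is to pull $\omega$ back along $H$ and get a contradiction. After shrinking $Y$ and compactifying, $H$ extends to a generically finite morphism of log pairs. The pullback $H^*\omega$ is then a nonzero log pluricanonical form on $\A^1_Y$ with log poles along its boundary. Nonvanishing holds because $H$ is generically finite and dominant, so generically étale in characteristic $0$.

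To finish, I would restrict to a general fibre $\A^1\times\{y\}$. On $\P^1$ with boundary the single point $\infty$, the log canonical degree is $-2+1=-1<0$. The form $H^*\omega$ has nonzero restriction to the general fibre, and after contracting along the $Y$-directions this yields a nonzero section of a negative-degree bundle on $\P^1$, which is impossible. Hence $\bar\kappa(X)=-\infty$. This is precisely the content of \cite[Proposition 11.4]{iitaka}: log-uniruled varieties have negative log Kodaira dimension.

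I expect the main obstacle to be the careful handling of the log structure: ensuring that $H$ extends to the compactifications so that pulled-back forms keep only logarithmic poles along the boundary over the fibres, and that the restriction to a general fibre is nonzero. I would simply invoke Iitaka's result for this rather than redo it.
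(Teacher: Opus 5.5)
Your proposal is correct and follows the paper's route. The corollary comes from taking the dominant morphism $H:\A^1_Y\to X$ of Theorem \ref{uniruledness generality}, which is generically finite because $\dim \A^1_Y=\dim X$, and applying Iitaka's Proposition 11.4 to get $\bar\kappa(X)\le\bar\kappa(\A^1_Y)=-\infty$. Your extra sketch is simply the standard proof of that cited result: pull back a log pluricanonical form, which is nonzero because $H$ is generically \'etale, then observe it must vanish on a general fibre, where $m(K+D)$ has degree $-m$.
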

\begin{remark}
Suppose, $X \in Sm/k$  
and $Z \subseteq X$ is a closed subscheme of codimension $\geq 2$. If $X$ has negative logarithmic Kodaira dimension, then $X \setminus Z$ has also negative logarithmic Kodaira dimension. If $X$ is rationally connected, then $X \setminus Z$ is also  rationally connected \cite[Exercise 3.7.1, Chapter IV]{kollar}. 
One can ask whether the same is true for $\A^1$-connectedness also: that is, if $X$ is $\A^1$-connected, then whether $X \setminus Z$ is also $\A^1$-connected
 \cite[Conjecture 2.18]{asokdoran}. 
\end{remark}
The conclusion to Theorem \ref{uniruledness generality} is true, if we assume that the generic point of $X$ is $\A^1$-ghost homotopic (Definition \ref{ghost homotopy definition}) to another $K(X)$-valued point.

\begin{theorem} \label{generic point ghost homotopy}
Let $X$ be a smooth $k$-variety and $\eta := Spec(K(X)) \to X$ be the generic point. Suppose, $y : Spec(K(X)) \to X$ is any other $K(X)$-valued point such that $y = \eta \in S^2(X)(K(X))$. Then there is a dominant morphism $\A^1_Y \to X$, for some $Y$ essentially smooth scheme over $k$ with $dim(Y) = dim(X) -1$.

\end{theorem}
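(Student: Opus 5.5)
The plan is to unwind the equality $y = \eta$ in $\sS^2(X)(K(X))$ into a ghost homotopy in $\sS(X)$ over a field. From that we extract, in some Nisnevich-local or essentially smooth model, a genuine non-constant naive $\A^1$-homotopy whose image hits the generic point of $X$. The argument is by contradiction: we assume no dominant $\A^1_Y \to X$ with $\dim Y = \dim X - 1$ exists, which is equivalent to $X$ not being $\A^1$-uniruled. We then show that every naive $\A^1$-homotopy through the generic point is constant, and deduce $y = \eta$ already in $X(K(X))$. Note that $\eta \neq y$ must hold somewhere for the statement to have content: if $\eta = y$ as points, the statement is vacuous in the intended reading. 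So we implicitly assume $y \neq \eta$ and argue that a nontrivial chain must occur.

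The first step is the fields case. Since $\sS^2(X) = \sS(\sS(X))$ and sections over $Spec\, K(X)$ of a sheafification equal presheaf sections over fields (fields are Henselian local), the equality means $\theta(\eta)$ and $\theta(y)$ in $\sS(X)(K(X))$ are joined by a chain of elements $H^{(j)} \in \sS(X)(\A^1_{K(X)})$. Each such element is, by Definition \ref{ghost homotopy definition}, an $\A^1$-ghost homotopy: a Nisnevich cover $V \to \A^1_{K(X)}$, a map $\phi: V \to X$, and chains of naive homotopies on a cover $V'$ of $V\times_{\A^1}V$.

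The second step is to find a dominant homotopy. There are two ways a nontrivial ghost homotopy can be built.

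(a) Some $\phi$ is non-constant on a component of $V$, a curve over $K(X)$ étale over $\A^1_{K(X)}$. Here we need that component to dominate $X$. We use the starting point: the lift $\tilde i_0$ gives a point of $V$ mapping into the class of $\eta$, and we track dominance along the chain. The key observation is that if a naive homotopy $H: \A^1_W \to X$ has $H(0)$ dominant with $W$ essentially of finite type over a field extension, then spreading out over $k$ gives $\A^1_{W_0} \to X$. Its image closure contains the image of $H(0)$, hence is all of $X$. Cutting $W_0$ by a general slice, or taking the image in a suitable Hilbert/Chow-type family, reduces to $\dim Y = \dim X - 1$, provided the family is not constant in the $\A^1$-direction.

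(b) The naive homotopies $H_i$ on $V'$ are non-constant. This case is handled by the same spreading-out argument.

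The third step is the contradiction. If all these naive homotopies starting from dominant points were constant, where ``constant'' means factoring through the projection, then equalities propagate. First $\phi\circ pr_1 = \phi\circ pr_2$ on $V'$, so $\phi$ descends to $\A^1_{K(X)} \to X$ by Nisnevich descent for the representable sheaf $X$. That map is itself a naive homotopy from a dominant point, hence constant. So $\theta(\eta)=\theta(y)$ already holds via naive homotopies, and iterating once more gives $\eta = y$. The extra remark in Definition \ref{ghost homotopy definition} is what lets us arrange $H_i(0)\neq H_i(1)$ for every $i$ when the endpoints differ, so nonconstancy is guaranteed at each stage.

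The main obstacle is step two. We need to guarantee that the nonconstant homotopy actually passes through a point specializing to the \emph{generic} point, rather than through a point with non-dense image, so that dominance of the spread-out family holds. To handle this, we first track the components of $V$ containing $\tilde i_0$. Then we use that $\eta$ has transcendence degree $\dim X$ over $k$. For a naive homotopy $H: \A^1_{V'} \to X$ to be nonconstant at a point lying over $\eta$, the induced map $\A^1_{Y} \to X$ (for $Y$ a model of the relevant point's residue field, cut down to dimension $\dim X - 1$ transversally to the $\A^1$-direction) must then be dominant, by a dimension count.
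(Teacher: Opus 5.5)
Your skeleton matches the paper's. You unwind the equality in $\sS^2(X)(F)$, $F=K(X)$, into an $\A^1$-ghost homotopy $\theta\in\sS(X)(\A^1_F)$ given by $f\colon V\to\A^1_F$, $\phi\colon V\to X$ and chains $H_i$ on $V'\to V\times_{\A^1_F}V$. You then look for a non-constant naive homotopy whose $0$-end is dominant and spread it out. But Step 3 has a genuine gap.

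Under your contradiction hypothesis you only know that naive homotopies whose starting map is \emph{dominant} are constant. To conclude $\phi\circ\text{pr}_1=\phi\circ\text{pr}_2$ on all of $V'$, and hence descend $\phi$, you need $\phi\circ\text{pr}_1$ to be dominant on every component of $V'$. That means every irreducible component of $V$ must dominate $X$. Nothing in your argument excludes a component $V_i$ with $\eta\notin\overline{\phi(V_i)}$. Such a component may be the only one containing a lift of the point $1\in\A^1_F$, so you cannot discard it and keep a Nisnevich cover. The chains over components of $V'$ lying over $V_i\times_{\A^1_F}V_i$ can be non-constant without being dominant, so non-uniruledness says nothing about them. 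Your ``main obstacle'' paragraph notices the issue, but tracking the components through $\tilde i_0$ and counting transcendence degree do not deal with these other components.

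The missing idea is the component analysis that the paper carries out in its preliminary reduction and its second case. It has two steps.
\begin{enumerate}
\item $\phi(\tilde i_0)$ and $\eta$ have the same class in $\sS(X)(F)$, so they are joined by a naive chain in $X(F)$. If $\phi(\tilde i_0)\neq\eta$, the first non-constant link starts at $\eta$ and spreads out to a dominant $\A^1_U\to X$. Under your hypothesis this forces $\phi(\tilde i_0)=\eta$, so the component $V_j$ through $\tilde i_0$ dominates $X$.
\item Take any other component $V_i$. Then $V_j\times_{\A^1_F}V_i\neq\emptyset$, since both map onto dense opens of $\A^1_F$. On a component $W_0$ of $V'$ over it, the chain runs from the dominant map $\phi\circ\text{pr}_1|_{W_0}$ to $\phi\circ\text{pr}_2|_{W_0}$. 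Its first non-constant link starts at a dominant map and would give a dominant homotopy. If instead all links are constant, then $\phi\circ\text{pr}_2|_{W_0}$ is dominant, so $\phi|_{V_i}$ is dominant.
\end{enumerate}
Only after this do all links on $V'$ start at dominant maps, and then your descent argument goes through.

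Relatedly, your dichotomy in Step 2(a) is misplaced. Non-constancy of $\phi$ on a component of $V$ produces no $\A^1$-family, because components of $V$ are merely \'etale over $\A^1_F$. Only the $H_i$, or $\phi$ after descent, are genuine naive homotopies.
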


\begin{proof}

The argument is along the line of the proof \cite[Theorem 2.11]{choudhury2}.
Suppose, $X \in Sm/k$ is a variety with the generic point $\eta$ such that there is $y \in X(F)$ such that 
$$\eta = y \in \sS^2(X)(F) \text{ and } \eta \neq y \in \sS(X)(F), \text{ where } F = K(X).$$
First observe that if there is some $y \in X(F)$ with $y \neq \eta \in X(F)$ and $\eta = y \in \sS(X)(F)$, then there is a dominant morphism $H: \A^1_U \to X$, for some $U \in Sm/k$ such that $H(0,-) \neq H(1,-): U \to X$. Indeed, since the points $\eta \neq y \in X(F)$ and $\eta = y \in \sS(X)(F)$, so there is some $\theta: \A^1_F \to X$ such that $\theta(0) = \eta$ and $\theta(0) \neq \theta (1) \in X(F)$. Since $F$ is the fraction field of $X$ and $X$ is of finite type over $k$, there is an open subscheme $U \subseteq X$ along with a morphism $\tilde{\theta}: \A^1_U \to X$ such that $\tilde{\theta} \circ i = \theta$, where $i: \A^1_F \to \A^1_U$ is given by $(Id, \eta)$. Since, $\theta(0) \neq \theta(1) \in X(F)$, so $\tilde{\theta}(0) \neq \tilde{\theta}(1) \in X(U)$ and as $\text{Im}(\theta)$ contains $\eta$, so $\text{Im}(\tilde{\theta})$ also contains $\eta$. Thus $\tilde{\theta}$ is a dominant morphism. Hence, such $H$ exists.

Now, assume
$$\eta \neq y \in \sS(X)(F) \text{ and } \eta = y \in \sS^2(X)(F),$$ 
so there is some $\theta: \A^1_F \to \sS(X)$ such that $\theta(0) \neq \theta(1) \in \sS(X)(F)$ and $\theta(0) = \eta$. The element $\theta \in \sS(X)(\A^1_F)$ is given by a Nisnevich covering $f: U \to \A^1_F$ along with a morphism $\phi: U \to X$ such that the following diagram commutes:
\[
\xymatrix{
U \ar[r]^\phi \ar[d]_f & X \ar[d]^\rho \\
\A^1_F \ar[r]_\theta & \sS(X)
}
\] 
here $\rho: X \to \sS(X)$ is the canonical epimorphism. Also there is a Nisnevich covering $W \to U \times_{\A^1_F} U$ such that there is a chain of $\A^1$-homotopies $G_1, \dots, G_n \in X(\A^1_W)$ such that (see Definition \ref{ghost homotopy definition})
$$G_1(0) = \phi \circ \text{pr}_1|_W , \  G_n(1)=\phi \circ \text{pr}_2|_W \text{ and } G_i(0) \neq G_i(1): W \to X \text{ for every }i,$$
where $\text{pr}_1, \text{pr}_2: U \times_{\A^1_F} U \to U$ are the projection morphisms.

Suppose, $U_1, \dots, U_r$ are the irreducible components of $U$ (these are also connected components of $U$, since $U$ is a finite type, smooth $F$-scheme). If possible, $\eta \notin \overline{\phi(U)}$ (that is, $\phi$ is not dominant). Since $\theta(0) = \eta \in \sS(X)(F)$ and $f$ is a Nisnevich covering, there is some
$u \in U(F)$
such that 
$$\phi(u) = \eta \in \sS(X)(F).$$
But since $\eta \notin \overline{\phi(U)}$ (we have assumed), so $\phi(u) \neq \eta \in X(F)$.
From the previous discussion, this implies that there is a dominant morphism 
$$\Tilde{\theta}: \A^1_U \to X \text{ such that }\Tilde{\theta}(0) \neq \Tilde{\theta}(1): U \to X.$$
Thus we can assume that there is an irreducible component $U_j$ (say) of $U$ such that $\eta \in \overline{\phi(U_j)}$.

For the first case, suppose that $\eta \in \overline{\phi(U_i)}$, for every $i$. Then there is an irreducible component $W_0$ of $W$ such that 
$$G_1|_{\A^1_{W_0}}(0) \neq G_1|_{\A^1_{W_0}}(1): W_0 \to X$$
Suppose $W_0$ maps to a component $U_p \times_{\A^1_F} U_q$ of $U \times_{\A^1_F} U$, for some $p, q$. Then, since $\text{pr}_1: U \times_{\A^1_F} U \to U$ is \'etale and $\eta \in \overline{\phi(U_p)}$, so the composition of the morphisms
$$W_0 \hookrightarrow W \to U \times_{\A^1_F} U \xrightarrow{\text{pr}_1} U \xrightarrow{\phi} X$$
is dominant. Since $G_1|_{\A^1_{W_0}}(0)=\phi \circ \text{pr}_1|_{W_0}$, therefore $G_1|_{\A^1_{W_0}}$ is also a dominant morphism.

For the remaining case, suppose there is some irreducible component $U_i$ of $U$ such that $\eta \notin \overline{\phi(U_i)}$. There is some irreducible component $W_0$ of $W$ that maps to $U_j \times_{\A^1_F} U_i$, then
$$\phi \circ \text{pr}_1|_{W_0} \neq \phi \circ \text{pr}_2|_{W_0}.$$
So there is some $t$ such that $G_t|_{\A^1_{W_0}}$ satisfies $G_t|_{\A^1_{W_0}}(0) \neq G_t|_{\A^1_{W_0}}(1)$ and $G_t|_{\A^1_{W_0}}(0) = \phi \circ \text{pr}_1|_{W_0}$. Since $W_0 \to U_j \times_{\A^1_F} U_i \xrightarrow{\text{pr}_1} U_j$ is \'etale and $\eta \in \overline{\phi(U_j)}$, so $G_t|_{\A^1_{W_0}}$ is dominant.

This completes the proof.
\end{proof}
\begin{remark}
In fact the same line of proof of Theorem \ref{generic point ghost homotopy} implies that if $X \in Sm/k$ is a surface and there are $x, y \in X(k)$ such that $x \neq y \in \sS(X)(k)$, but $x = y \in \sS^2(X)(k)$. Then there is a dominant morphism $\A^1_C \to X$, for some smooth curve $C$ over $k$.
More generally in case of surfaces, we have the following theorem (this is proved in \cite[Proposition 4.16]{aritra}):
\end{remark}
\begin{theorem} \label{non uniruled case}
Suupose, $Y \in Sm/k$ is an affine surface and $X \subseteq Y$ is an open subscheme.
If the morphism $\sS(X)(U) \to \sS^2(X)(U)$ is not injective, for some $U \in Sm/k$ then there is a dominant morphism $\A^1_C \to X$, for some smooth curve $C$ over $k$.
\end{theorem}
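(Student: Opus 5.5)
The plan is to follow the strategy of Theorem \ref{generic point ghost homotopy}, but to start from an arbitrary non-injectivity witness instead of the generic point. Suppose $\sS(X)(U) \to \sS^2(X)(U)$ is not injective. Since these are Nisnevich sheaves, we may pass to a Nisnevich stalk and then to the generic point of an irreducible component. This reduces us to the following situation: $F$ is a finitely generated field extension of $k$ (or a Henselian local essentially smooth $k$-scheme, which we then specialize to its generic point), and there are $a,b \in X(F)$ with $a \neq b$ in $\sS(X)(F)$ but $a = b$ in $\sS^2(X)(F)$. Some care is needed here, because non-injectivity over a Henselian local scheme need not persist at its generic point. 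If it does not, we instead work with the closed fibre, or use Theorem \ref{collection of A1-invariance results}(1)-type surjectivity for curves to move the witness. I expect this reduction to go through, but I flag it as a point to check.

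Unwinding $a=b$ in $\sS^2(X)(F)$, we obtain a chain of $\A^1$-ghost homotopies $\theta: \A^1_F \to \sS(X)$ with $\theta(0)\neq\theta(1)$. Each one is given by a Nisnevich cover $f: V\to \A^1_F$, a map $\phi: V \to X$, a cover $W \to V\times_{\A^1_F}V$, and chains of naive homotopies $G_i \in X(\A^1_W)$ with $G_i(0)\neq G_i(1)$, as in Definition \ref{ghost homotopy definition}. The goal is to find a non-constant naive homotopy $H:\A^1_{W_0}\to X$ whose image has two-dimensional closure. The curve $C$ is then obtained by restricting $W_0$ to a suitable curve and spreading out over $k$, as in the first step of the proof of Theorem \ref{generic point ghost homotopy}.

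The key dichotomy uses that $X$ is a surface. Consider the image $\overline{\phi(V)}$ in $X$.
\begin{itemize}
\item If some component of $V$ maps dominantly onto $X$, we argue exactly as in the two cases of the proof of Theorem \ref{generic point ghost homotopy}. Some $G_t$, restricted to a component $W_0$ lying over that dominant component, has $G_t(0)$ dominant and $G_t(0)\neq G_t(1)$, so $G_t$ is a non-constant dominant family of affine lines.
\item Otherwise $\phi$ maps every component of $V$ into a curve. Since $\theta$ is non-constant in $\sS(X)$, $\phi$ cannot be constant up to naive homotopy on all of $V$. Then either $\phi$ already sweeps $\A^1$-curves in a one-parameter family over $F$, which after spreading out over a model of $F$ gives a dominant $\A^1_C\to X$, or the curves $\overline{\phi(V_i)}$ are fixed curves in $X$.
\end{itemize}
In the fixed-curve case, the chains $G_i$ connect points of these curves by naive homotopies parametrized by $W$. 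If all of the $G_i$ also landed in curves, the curves would have to be $\A^1$-rigid. We would then get a contradiction with $\theta(0)\neq\theta(1)$ by Theorem \ref{collection of A1-invariance results}(2), applied to the normalizations.

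The main obstacle is this last case: ruling out that all homotopies live on a configuration of non-rigid curves (images of $\A^1$) inside $X$ without ever giving a two-dimensional family. Here the hypothesis that $X$ is open in an affine surface $Y$ is used. Every $\A^1\to X$ is then a polynomial curve in $Y$, and a non-dominant family of such curves over $W$ is, after shrinking, constant in the sense that only finitely many affine lines in $X$ appear. A finite union of affine lines gives a sheaf whose $\sS$ already collapses chains, so $\sS(X)\to\sS^2(X)$ would be injective at that point. This would contradict the hypothesis. I would first try to make this precise by showing that, for non-dominant families, the ghost homotopy can be refined to a genuine naive chain, in the spirit of the almost-proper lifting result \cite[Theorem 3.9]{bhs}, using affineness in place of properness.
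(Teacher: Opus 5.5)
Your proposal has two genuine gaps.

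\textbf{The reduction to a field-valued witness.} This is not a technicality, and as stated it does not work. Non-injectivity of the epimorphism $\sS(X)\to\sS^2(X)$ is detected at a Henselian local essentially smooth $U$, where $\sS(X)(U)=X(U)/{\sim}$. However, the restriction maps of $\sS(X)$ to the generic point or to the closed point of $U$ are not known to be injective, so the inequality $a\neq b$ can be lost.

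That this is a real issue can be seen from a non-minimal birationally ruled surface over a curve of positive genus. There $\sS(X)\to\sS^2(X)$ is bijective on all sections over $Spec\ F$ (almost-properness, see the proof of Theorem \ref{rationally connected}), yet it is not an isomorphism (Theorem \ref{proper surface rigid}). This is exactly why the statement for arbitrary $U$ goes beyond Theorem \ref{quasi projective non negative} and the preceding remark about $k$-points. Your fallback options (closed fibre, surjectivity for curves) do not address it.

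You should not reduce at all. The data $(V,\phi,W,G_\bullet)$ of a ghost homotopy live over $\A^1_U$. The only property of $U$ the argument needs is that $\A^1_U$ is irreducible, so that every component of $V$ and of $V\times_{\A^1_U}V$ has dense open image in $\A^1_U$, and any two such images meet.

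\textbf{The non-dominant case.} More seriously, this case, which is the heart of the theorem, is not proved. Three of your claims fail:
\begin{itemize}
\item The claim that the curves ``would have to be $\A^1$-rigid'' is wrong, since they contain non-constant images of $\A^1$.
\item The ``one-parameter family swept by $\phi$'' cannot occur when every component of $V$ maps into a curve.
\item The assertion that a finite configuration of affine lines makes $\sS$ collapse ghost chains is a restatement of what must be shown. The difficulty is not the number of lines but gluing the local lifts $\phi_i$, which may map into curves that are not lines at all.
\end{itemize}

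What is missing is the following argument.
\begin{itemize}
\item Assume there is no dominant $\A^1_C\to X$; a dominant, generically non-constant family over $W_0$ can always be cut down to a curve. Then each $G_s|_{\A^1_{W_0}}$ is either constant along the lines or has image in a single curve $E_s$.
\item Since $X$ is open in an affine surface, $E_s$ is not complete. So its normalization is $\A^1$, and every morphism from an irreducible smooth scheme to $E_s$ is naively homotopic to a constant $k$-point.
\item On each component $W_0$ over $V_i\times_{\A^1_U}V_j$, the first non-constant $G_s$ satisfies $G_s(0)=\phi_i\circ\text{pr}_1|_{W_0}$, whose image is dense in $\overline{\phi_i(V_i)}$.
\item Hence, if some $\overline{\phi_i(V_i)}$ is neither a point nor a curve with normalization $\A^1$, then for every $j$ all chains over $V_i\times_{\A^1_U}V_j$ are trivial, so $\phi_i\circ\text{pr}_1=\phi_j\circ\text{pr}_2$ there. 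By density and separatedness, $\phi$ then descends to an honest morphism $\A^1_U\to X$ lifting $\theta$, contradicting $\theta(0)\neq\theta(1)$. This is the ``refinement to a genuine naive chain'' you hoped for.
\item Otherwise, with $\rho:X\to\sS(X)$ the canonical map, every $\rho\circ\phi_i$ is a constant class $[x_i]$. These classes agree because the $V_i$ overlap over $\A^1_U$ (specialize to a $k$-point), so $\theta$ is constant, again a contradiction.
\end{itemize}
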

We have the following consequences to Theorem \ref{non uniruled case}, which says that over $\C$ if $X$ is not $\A^1$-uniruled, then $\pi_0^{\A^1}(X) (Spec \ \C)$ is always non-trivial:
\begin{corollary}
Suppose $X \subseteq Y$ is an open subscheme, where $Y$ is a smooth affine surface. 
\begin{enumerate}
\item If $X$ is not $\A^1$-uniruled, then the map $\sS(X) \to \sS^2(X)$ is an isomorphism. Thus 
$$\pi_0^{\A^1}(X) \cong \sS(X) \cong \sL(X) \text{ is }\A^1\text{-invariant}.$$
\item Suppose, $\pi_0^{\A^1}(X)(Spec \ k)$ is trivial. Then one of the following holds (see also \cite[Proposition 2.16]{choudhury2}):
\begin{enumerate}
\item Either $X$ is $\A^1$-uniruled.
\item $\sS(X)(Spec \ k)$ is trivial.

Therefore, if $k= \C$, then $X$ is always $\A^1$-uniruled and also rational.
\end{enumerate}
\end{enumerate}
\end{corollary}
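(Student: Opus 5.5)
The plan is to prove part (1) directly from Theorem \ref{non uniruled case}, and then deduce part (2) from part (1) together with the earlier results on $\sS$ and $\sL$.

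\textbf{Part (1).} Assume $X$ is not $\A^1$-uniruled.
First, $X$ admits no dominant morphism $\A^1_C \to X$ from any smooth curve $C$. Indeed, such a morphism would be dominant between varieties of the same dimension $2$, hence generically finite, which would make $X$ $\A^1$-uniruled.
By the contrapositive of Theorem \ref{non uniruled case}, the map $\sS(X)(U) \to \sS^2(X)(U)$ is then injective for every $U \in Sm/k$. It is always surjective, since $\sS(X) \to \sS(\sS(X))$ is the canonical epimorphism $\theta$ and is already surjective on sections. So $\sS(X) \to \sS^2(X)$ is an isomorphism of sheaves.
Applying $\sS$ repeatedly gives $\sS^n(X) \cong \sS(X)$ for all $n \geq 1$, so the colimit satisfies $\sL(X) \cong \sS(X)$.
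The canonical epimorphisms $\sS(X) \to \pi_0^{\A^1}(X) \to \sL(X)$ compose to this isomorphism. The second map is therefore also injective, hence an isomorphism. By \cite[Corollary 2.18]{bhs} it follows that $\pi_0^{\A^1}(X)$ is $\A^1$-rigid, and $\pi_0^{\A^1}(X) \cong \sS(X) \cong \sL(X)$.

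\textbf{Part (2).} Assume $\pi_0^{\A^1}(X)(Spec\ k)$ is trivial.
If $X$ is $\A^1$-uniruled, we are in case (a). Otherwise part (1) gives $\sS(X) \cong \pi_0^{\A^1}(X)$, so $\sS(X)(Spec\ k)$ is trivial and we are in case (b).

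For the final sentence, take $k = \C$ and suppose $X$ is not $\A^1$-uniruled. Then $\sS(X)(\C)$ is trivial, so any two $\C$-points of $X$ are joined by a chain of naive $\A^1$-homotopies $\A^1_\C \to X$. Since $X$ has more than one point, at least one of these maps is non-constant, so $X$ contains affine lines through every point.
The main obstacle is to upgrade this to a dominant family $\A^1_C \to X$. The idea is a countability argument over the uncountable field $\C$. Morphisms $\A^1 \to X$ of bounded degree form countably many finite-type families. If no family is dominant, the union of the images of all these maps is a countable union of curves, which cannot cover all of $X(\C)$. Hence some family $\A^1_C \to X$ is dominant, contradicting the assumption that $X$ is not $\A^1$-uniruled. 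So $X$ is $\A^1$-uniruled.

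For rationality: an $\A^1$-uniruled surface has negative logarithmic Kodaira dimension. For smooth affine surfaces over $\C$, negative log Kodaira dimension implies $\A^1$-ruledness by Miyanoya–Sugie/Fujita, and an $\A^1$-ruled variety $\A^1 \times C$ is birational to $\P^1 \times C$.
To pin down the base curve $C$, I would use triviality of $\pi_0^{\A^1}(X)(\C)$. The $\A^1$-fibration $X \to C$ induces a map $\pi_0^{\A^1}(X) \to C$ whenever $C$ is $\A^1$-rigid, and such a map would have to be constant on $\C$-points, which is impossible for a dominant fibration. Hence $C$ is not $\A^1$-rigid, so $C$ is rational (an open subset of $\P^1$), and $X$ is rational.
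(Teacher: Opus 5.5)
\textbf{Parts (1) and (2).} Your argument is the intended one. The paper treats the corollary as a direct consequence of Theorem \ref{non uniruled case}, and your chain is exactly that deduction: no dominant $\A^1_C\to X$, hence $\sS(X)\to\sS^2(X)$ is injective on all sections, hence $\sS^n(X)\cong\sS(X)$ and $\sL(X)\cong\sS(X)$, with $\pi_0^{\A^1}(X)$ squeezed in between.

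One justification should be corrected. The map $\sF(U)\to\sS(\sF)(U)$ is \emph{not} surjective on sections in general, because a section of the sheafification need only lift Nisnevich-locally; this failure is precisely what ghost homotopies measure. What you need instead is that $\sS(X)\to\sS^2(X)$ is an epimorphism of Nisnevich sheaves. Together with injectivity on all sections it is then a monomorphism and an epimorphism, hence an isomorphism.

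\textbf{The clause over $\C$: uniruledness.} The paper only points to \cite[Proposition 2.16]{choudhury2} here. Your countability argument over the uncountable field is the natural proof of that step. Add the routine final move of cutting the parameter variety $T$ of a dominant family $\A^1\times T\to X$ down to a curve, since the definition of $\A^1$-uniruled asks for a generically finite map.

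\textbf{The clause over $\C$: rationality.} Your route differs from the paper's and has a small gap as written. Miyanishi--Sugie--Fujita is a statement about smooth \emph{affine} surfaces, whereas $X$ is only open in the affine surface $Y$. Apply it to (the component of) $Y$ instead: $Y$ is $\A^1$-uniruled because $X$ is dense in it, so $\bar\kappa(Y)=-\infty$, and you get an $\A^1$-fibration $Y\to C$. Then run your rigidity argument on the dominant composite $X\to Y\to C$.

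The paper (see the corollary on $\A^1$-connected surfaces later in Section \ref{rationality component}) avoids affineness altogether. A smooth compactification $\bar X$ is uniruled, hence rational or ruled by a morphism $\pi:\bar X\to C$. If $g(C)>0$, then $\pi(X)$ is an $\A^1$-rigid curve, and triviality of $\pi_0^{\A^1}(X)(k)$ forces it to be a point.

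Both versions hinge on the same observation: a dominant map from $X$ to an $\A^1$-rigid curve is incompatible with $\pi_0^{\A^1}(X)(k)=\ast$. The compactification argument works for any smooth surface, while yours uses the ambient affine $Y$ to produce an honest $\A^1$-fibration.
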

More generally, if $X$ is a quasi-projective surface which is not $\A^1$-uniruled, then $\pi_0^{\A^1}(X)$ and $\sS(X)$ agree over the field valued sections.
\begin{theorem} \label{quasi projective non negative}
    Suppose $X \in Sm/k$ is a quasi-projective surface of non-negative logarithmic Kodaira dimension. Then $\sS(X)(F) \cong \sS^2(X)(F)$, for every finitely generated separable field extension $F/k$.
\end{theorem}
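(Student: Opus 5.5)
The plan is to argue by contradiction, following the pattern of the proof of Theorem \ref{generic point ghost homotopy} and of Theorem \ref{non uniruled case}. The map $\sS(X)(F) \to \sS^2(X)(F)$ is always surjective, since $\sS(X)\to\sS^2(X)$ is an epimorphism of Nisnevich sheaves and $Spec\ F$ is Henselian local. So only injectivity needs proof. Suppose $x, y \in X(F)$ satisfy $x \neq y$ in $\sS(X)(F)$ but $x = y$ in $\sS^2(X)(F)$. I will extract from this a non-constant morphism $\A^1 \to X$ through a general point, varying in a one-parameter family, so that $X$ is $\A^1$-uniruled. By Iitaka's result cited in Corollary \ref{connected implies negative}, $\A^1$-uniruledness forces negative logarithmic Kodaira dimension, which contradicts the hypothesis.

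\textbf{Step 1: reduce to a ghost homotopy.} Since $x=y\in\sS^2(X)(F)$, there is a chain of elements of $\sS(X)(\A^1_F)$ connecting $x$ and $y$. At least one link $H\in\sS(X)(\A^1_F)$ must satisfy $H(0)\neq H(1)$ in $\sS(X)(F)$. By Definition \ref{ghost homotopy definition}, $H$ is an $\A^1$-ghost homotopy. It consists of a Nisnevich cover $f:V\to\A^1_F$, a map $\phi:V\to X$, and, on a Nisnevich cover $V'$ of $V\times_{\A^1_F}V$, chains of naive homotopies $H_i:\A^1_{V'}\to X$ with each $H_i(0)\neq H_i(1)$.

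\textbf{Step 2: show some $H_i$ has two-dimensional image.} Spread everything out over a smooth $k$-model of $F$. If some $H_i$ restricted to a component of $V'$ has image closure equal to $X$, then it gives a dominant map $\A^1_W \to X$, and $X$ is $\A^1$-uniruled. Otherwise every $H_i$ and $\phi$ have image of dimension at most one over the model. I then plan a case analysis like the one in the proof of Theorem \ref{generic point ghost homotopy}, tracking which components of $V$ map into which curves.
\begin{enumerate}
\item If every component of $V$ maps into the same irreducible curve $D\subseteq X_F$ (or to a point), then $H$ factors through $\sS(D)$ on a cover. The image of $\phi$ on each component lands in one curve. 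The homotopies $H_i$, being non-constant, force $D$ (or the affine lines joining components) to be images of $\A^1$.
\item If the components of $V$ map to different curves, these curves are glued by chains of $\A^1$'s.
\end{enumerate}
In either case I expect to obtain a curve in $X_F$ that is a chain of images of $\A^1_F$ joining $x$ to $y$. This would mean $x=y$ in $\sS(X)(F)$, a contradiction. This is the rigidity argument behind \cite[Theorem 3.9]{bhs}, adapted to the quasi-projective setting by compactifying $X\subseteq\bar X$ and using that curves on an almost proper sheaf extend.

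\textbf{Main obstacle.} The hard part is Step 2. There one must rule out a ghost homotopy whose pieces are genuinely non-constant $\A^1$'s contained in curves, without the properness used in \cite{bhs}. My first attempt would use the fact that a non-negative logarithmic Kodaira dimension surface admits only finitely many (or at most a non-dominating family of) $\A^1$-curves through general points. The aim is to show that the images of all $H_i$ lie in a fixed proper closed subset $Z\subset X$, a union of $\A^1$-curves, and then reduce to the one-dimensional situation. For a curve $Z$, every $\A^1$-image component is a rational curve with one place at infinity, so the naive equivalence on $Z$ can be computed directly. On such curves I would then check that a Nisnevich-local chain of homotopies agreeing on overlaps glues into a global chain over $F$. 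This is done via the Brown--Gersten surjectivity on $\pi_0$ used in Theorem \ref{collection of A1-invariance results}(1) for the DVR-type local pieces of $\A^1_F$, and Noetherian induction along the cover.
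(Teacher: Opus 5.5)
Your skeleton matches the paper's. You argue by contradiction, pick a link $H\in\sS(X)(\A^1_F)$ with $H(0)\neq H(1)$, unpack it as a ghost homotopy $(\phi,G_1,\dots,G_n)$, and aim to show that some non-trivial $G_i|_{\A^1_{W_0}}$ must be dominant, so that $X$ would be $\A^1$-uniruled, contradicting $\bar\kappa(X)\geq 0$.

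The gap is the non-dominant case, which is the whole content of the proof. In your proposal it is only an expectation, and the tools you suggest for it do not work.
\begin{enumerate}
\item The Brown--Gersten surjectivity on $\pi_0$ from Theorem~\ref{collection of A1-invariance results}(1) is a property of Nisnevich-fibrant spaces. $Sing_*$ of a variety need not satisfy Nisnevich descent (cf.\ Remark~\ref{non local}(1)). For $Sing_*(Z)$, the surjectivity you want over a cover of $\A^1_F$ says precisely that a ghost homotopy comes from a naive chain over $F$. That is the point at issue, so invoking it is circular.
\item Finiteness of $\A^1$-curves fails when $\bar\kappa\geq 0$: an elliptic K3 surface with infinitely many sections has $\bar\kappa=0$ and infinitely many rational, hence $\A^1$-, curves. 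It is also unnecessary, since if no $G_i|_{\A^1_{W_0}}$ is dominant, the finitely many images already lie in a curve.
\item Passing to a compactification $\bar X$ loses information, because $\sS(\bar X)$ identifies points through curves that leave $X$.
\item The closure in $X$ of a non-constant line need not be an image of $\A^1$. So your claim that the curves $D$ are ``images of $\A^1$'' is not right as stated.
\end{enumerate}

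The missing step, as the paper does it, is global and involves no gluing of local chains.
\begin{enumerate}
\item Split into cases according to which closures $\overline{\phi(U_p)}$ contain the endpoints $\alpha,\beta$. In each case, pick a component $W_0$ of $W$, lying over some $U_p\times_{\A^1_F}U_q$, that carries a non-trivial $G_i$.
\item Since $W\to U\times_{\A^1_F}U$ and the projections are \'etale, $\overline{\phi\circ\mathrm{pr}_1(W_0)}=\overline{\phi(U_p)}$. So if $G_i|_{\A^1_{W_0}}$ is not dominant, its image closure is that single irreducible curve.
\item This curve is swept out by an honest line $\gamma:\A^1_F\to X$, namely the restriction of $G_i$ to a point $w$ of $W_0$ where $G_i(0)$ and $G_i(1)$ differ.
\item Quasi-projectivity enters exactly here: either $\mathrm{Im}(\gamma)$ is closed in $X$, or $\gamma$ extends to $\bar\gamma:\P^1_F\to X$.
\item Chaining these curves joins $\alpha$ to $\beta$ by images of $\A^1_F$'s and $\P^1_F$'s. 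This is a naive chain over $F$, contradicting $\alpha\neq\beta$ in $\sS(X)(F)$.
\end{enumerate}
Hence some $G_i|_{\A^1_{W_0}}$ is dominant. Then $X_F$ is $\A^1$-uniruled, which contradicts $\bar\kappa(X_F)=\bar\kappa(X)\geq 0$ (invariance of logarithmic Kodaira dimension under base extension).
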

\begin{proof}
The proof follows is similar to the proof of Theorem \ref{generic point ghost homotopy}.
If possible there are $\alpha, \beta \in X(F)$ such that $\alpha \neq \beta \in \sS(X)(F)$ and $\alpha = \beta \in \sS^2(X)(F)$. Then there is some $\theta \in \sS(X)(\A^1_F)$ such that $\theta(0) = \alpha \in \sS(X)(F)$ and $\theta(1)=\beta \in \sS(X)(F)$ (we can assume this). Following the notations in the proof of Theorem \ref{generic point ghost homotopy}, there is $\phi: U \to X$ and a chain of $\A^1$-homotopies $G_1, \dots,G_n: \A^1_W \to X$ such that $G_1(0)=\phi\circ \text{pr}_1|_W$ and $G_n(1)=\phi\circ \text{pr}_2|_W$. Suppose, $U_1, \dots,U_r$ are the irreducible components of $U$. The proof will be completed considering the following cases:

\textbf{Case 1:} Suppose $\alpha, \beta \in \overline{\phi(U_p)}$, for every $p$. There is some irreducible component $W_0$ of $W$ such that $G_1|_{\A^1_{W_0}}(0) \neq G_1|_{\A^1_{W_0}}(1): W \to X$. Since $F$ is an infinite field ($k$ is algebraically closed) and $W_0$ is finite type $F$-scheme, the $F$-points are dense in $W_0$. So the morphisms $G_1|_{\A^1_{W_0}}(0)$ and $G_1|_{\A^1_{W_0}}(1)$ are different at some $F$-point (say $w$). So the composition of morphisms
$$\gamma: \A^1_F \xrightarrow{(Id, w)} \A^1_{W_0} \xrightarrow{G_1|_{\A^1_{W_0}}} X$$
is non-constant, call it a line $\gamma$ in $X$.
If $\text{Im}(\gamma)$ is closed, then $G_1|_{\A^1_{W_0}}$ must be dominant, since $\overline{G_1|_{\A^1_{W_0}}(0)} = \overline{\phi(U_p)}$, for some $p$ and both $\alpha, \beta$ can not be in $\text{Im}(\gamma)$. 
		
If $\text{Im}(\gamma)$ is not closed, then $\gamma$ can be extended to $\bar{\gamma}: \P^1_F \to X$. The image $\bar{\gamma}(\P^1_F)$ is closed in $X$. Since $\alpha \neq \beta \in \sS(X)(F)$ and $\alpha, \beta \in \overline{\phi(U_p)}$, for every $p$, so $\text{Im}(\bar{\gamma}) \subsetneq \overline{G_1|_{\A^1_{W_0}}}$. So $G_1|_{\A^1_{W_0}}$ is dominant. This implies that $X_F:= X \times_k Spec \ F$ has negative logarithmic Kodaira dimension. But the logarithmic Kodaira dimension is invariant under base extension (\cite[Section 1.1]{invariant}), so from the hypothesis $X_F$ has non-negative logarithmic Kodaira dimension. This is a contradiction.

\


    \textbf{Case 2:} \textbf{Suppose there is some $p$ such that $\alpha \notin \overline{\phi(U_p)}$.}

Case 2 consists two subcases:

\textbf{Subcase 2(a):} \textbf{Renumbering if necessary, assume that there is some $\mathbf{t < n}$ such that 
$$\alpha, \beta \in  \overline{\phi(U_1)}, \dots, \overline{\phi(U_t)}$$ 
and $\alpha \notin \overline{\phi(U_p)}$, for some $p >t$.
}
There is an irreducible component $W_0$ of $W$ that maps to $U_t \times_{\A^1_F} U_p$. Proceeding as before, the homotopy $G_1|_{\A^1_{W_0}}$ satisfies $G_1|_{\A^1_{W_0}}(0) \neq G_1|_{\A^1_{W_0}}(1)$ and $G_1|_{\A^1_{W_0}}$ is dominant. This contradicts the fact that $X$ (or $X_F$) has negative logarithmic Kodaira dimension.

\  
\textbf{Subcase 2(b):} \textbf{There exist $p,q$ such that 
$$\alpha \in \overline{\phi(U_p)}, \ \beta \in \overline{\phi(U_q)} \text{ and } \beta \notin \overline{\phi(U_p)}.$$
}

There is an irreducible component $W_0$ of $W$ that maps to $U_p \times_{\mathbb{A}^1_F} U_q$. Since the projection maps and the morphism $W \to U \times_{\mathbb{A}^1_F} U$ are the \'etale maps, so 
$$\overline{\text{Im}(\phi \circ \text{pr}_1|_{W_0})} = \overline{\phi(U_p)} \text{ and } \overline{\text{Im}(\phi \circ \text{pr}_2|_{V_0})} = \overline{\phi(U_q)}.$$
Since $\beta \notin \overline{\phi(U_p)}$, so
$$\phi \circ \text{pr}_1|_{W_0} \neq \phi \circ \text{pr}_2|_{W_0}.$$
We can assume all the $\mathbb{A}^1$-homotopies $G_1|_{\mathbb{A}^1_{W_0}}, \dots, G_n|_{\mathbb{A}^1_{W_0}}$ satisfy 
$$G_i|_{\mathbb{A}^1_{W_0}}(0) \neq G_i|_{\mathbb{A}^1_{W_0}}(1), \text{ for every }i.$$
So $\text{Im}(G_i|_{\mathbb{A}^1_{V_0}})$ contains a line, given by $\gamma_i: \A^1_F \to X$. 
Either $\text{Im}(\gamma_i)$ is closed in $X$ or $\gamma_i$ can be extended to a morphism $\bar{\gamma_i}: \P^1_F \to X$. Thus if $\overline{\text{Im}(G_i|_{\A^1_{W_0}})}$ is of dimension $1$, for every $i$, then there is a chain consisting $\A^1_F$'s (or $\P^1_F$'s) which contains both $\alpha$ and $\beta$ (since $\alpha \in \overline{\phi(U_p)}$ and $\beta \in \overline{\phi(U_q)}$). This is a contradiction to the fact that $\alpha \neq \beta \in \sS(X)(F)$. So there is some $i$ such that $G_i|_{\mathbb{A}^1_{W_0}}$ is dominant. This contradicts that $X$ (or $X_F$) has negative logarithmic Kodaira dimension.

Therefore, the morphism $\sS(X) \to \sS^2(X)$ induces bijection over $Spec \ F$, for every finitely generated field extension $F/k$.

\end{proof}
\begin{corollary} \
Suppose $X \in Sm/k$ is a quasi-projective surface of non-negative logarithmic Kodaira dimension. Then the map
$$\sS(X) \to \pi_0^{\A^1}(X) $$
induces isomorphism over $Spec \ F$, for every finitely generated separable field extension $F/k$.
\end{corollary}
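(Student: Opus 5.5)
The plan is to factor the map in question through the tower $\sS(X)\to\sS^2(X)\to\cdots\to\sL(X)$ and control both halves on field-valued sections. The first half is Theorem \ref{quasi projective non negative}. The second half comes from the Proposition identifying $\pi_0^{\A^1}$ with $\sL(\pi_0^{\A^1})$ on fields, together with the fact that $\sS(X)\to\pi_0^{\A^1}(X)$ is an epimorphism.

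\textbf{Surjectivity.} The canonical morphism $\sS(X)\to\pi_0^{\A^1}(X)$ is an epimorphism of Nisnevich sheaves, by \cite[Lemma 2.2.5]{am}. The spectrum of a field is Henselian local, so sections over $Spec\ F$ are stalks. Hence the map $\sS(X)(F)\to\pi_0^{\A^1}(X)(F)$ is surjective.

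\textbf{Injectivity.} Compose with the canonical epimorphism $\pi_0^{\A^1}(X)\to\sL(X)$ coming from the universal property of $\sL$. The composite $\sS(X)\to\pi_0^{\A^1}(X)\to\sL(X)$ is then the canonical map $\sS(X)\to\varinjlim_n\sS^n(X)$. So it suffices to prove that $\sS(X)(F)\to\sL(X)(F)=\varinjlim_n \sS^n(X)(F)$ is injective; the filtered colimit commutes with taking stalks at $Spec\ F$. Injectivity of the first map then follows, since the first map of a composite is injective whenever the composite is.

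\textbf{Stabilization of the tower on fields.} I will show by induction that $\sS^{n}(X)(F)\to\sS^{n+1}(X)(F)$ is a bijection for all $n\ge1$. Surjectivity holds at each stage because each $\sS^n(X)\to\sS^{n+1}(X)$ is an epimorphism. The case $n=1$ is exactly Theorem \ref{quasi projective non negative}.

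For $n\ge2$ I would rerun the proof of Theorem \ref{quasi projective non negative}. Suppose $\alpha,\beta\in X(F)$ satisfy $\alpha\neq\beta$ in $\sS^n(X)(F)$ but become equal in $\sS^{n+1}(X)(F)$. Then there is a ghost homotopy, i.e.\ a section of $\sS^n(X)$ over $\A^1_F$ given Nisnevich locally. Lift it step by step: first through $\sS^{n-1}(X)$, and eventually down to $X$ via a tower of Nisnevich covers and chains of naive homotopies. The key geometric input is unchanged: any non-constant naive homotopy $\A^1_{W_0}\to X$ that is dominant forces $X_F$ to have negative logarithmic Kodaira dimension. Log Kodaira dimension is invariant under base change, so this contradicts the hypothesis. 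Otherwise all homotopies have one-dimensional image, and we obtain a chain of $\A^1_F$'s and $\P^1_F$'s joining $\alpha$ to $\beta$, which already forces $\alpha=\beta$ in $\sS(X)(F)$.

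\textbf{Main obstacle.} I expect the hardest step to be the iterated lifting for $n\ge2$: checking that the case analysis on irreducible components still isolates a single dominant homotopy. If the direct lifting is too messy, I would try a softer route. That route argues that each field-valued point of $\sS^n(X)$ lifts to $X(F)$, and that the chain-connectedness relations generated at higher levels are again realized by naive $\A^1$-chains in $X_F$, because non-negative log Kodaira dimension rules out $\A^1$-cylinders.

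Once the tower stabilizes, $\sS(X)(F)\cong\sL(X)(F)$, and the injectivity step completes the proof that $\sS(X)(F)\to\pi_0^{\A^1}(X)(F)$ is a bijection.
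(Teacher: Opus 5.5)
Your reduction is correct, and it is the deduction the paper intends. The paper gives no separate argument; it records the corollary as a consequence of Theorem \ref{quasi projective non negative} together with the agreement of $\pi_0^{\A^1}$ and $\sL$ on fields. Surjectivity on $\mathrm{Spec}\,F$ comes from the epimorphism $\sS(X)\to\pi_0^{\A^1}(X)$. Injectivity reduces, via $\pi_0^{\A^1}(X)\to\sL(X)$, to injectivity of $\sS(X)(F)\to\varinjlim_n\sS^n(X)(F)$.

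\textbf{The gap.} It is the stabilization for $n\ge 2$. Theorem \ref{quasi projective non negative} only gives $\sS(X)(F)\cong\sS^2(X)(F)$, and the ``rerun the proof'' you propose does not work as described. Lift $\theta\in\sS^n(X)(\A^1_F)$ to $\phi: U\to X$ on a Nisnevich cover. The gluing data on $U\times_{\A^1_F}U$ are then chains of sections of $\sS^{n-1}(X)$ over $\A^1_W$, i.e.\ ghost homotopies of lower depth. These lift to $X$ only Nisnevich-locally on $\A^1_W$, not on $W$. So at the bottom of the tower you get maps to $X$ from \'etale neighbourhoods of $\A^1_W$, not naive homotopies $\A^1_{W_0}\to X$. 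Such families of curves need not consist of affine lines, and non-negative logarithmic Kodaira dimension says nothing about them. Hence neither the ``dominant $\A^1$-family'' contradiction nor the ``chain of lines from $\alpha$ to $\beta$'' alternative is available. Your ``softer route'' only restates the desired conclusion.

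\textbf{The missing idea.} No new geometry is needed: induct on $n$, using the field-level statement at generic points. Assume $\sS(X)(K)\to\sS^n(X)(K)$ is bijective for every finitely generated $K/k$; for $n=2$ this is the theorem.
\begin{enumerate}
\item Given $\theta\in\sS^n(X)(\A^1_F)$, lift it on a Nisnevich cover $U\to\A^1_F$ to $\phi\in\sS(X)(U)$.
\item The two pullbacks of $\phi$ to $U\times_{\A^1_F}U$ agree in $\sS^n(X)$. The generic points of its components have residue fields finitely generated over $k$, so by the inductive hypothesis the pullbacks agree in $\sS(X)$ there, hence on a dense open subset. Its complement is a finite set of closed points lying over a finite set $B\subset\A^1_F$.
\item Refine $U$ to a Nisnevich cover $U'$ with exactly one point over each $b\in B$, with residue field $\kappa(b)$. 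Then every point of $U'\times_{\A^1_F}U'$ over $B$ lies on the (open) diagonal.
\item Hence the two pullbacks of the induced $\phi'\in\sS(X)(U')$ agree on all of $U'\times_{\A^1_F}U'$. So $\phi'$ descends to some $\theta'\in\sS(X)(\A^1_F)$ lifting $\theta$.
\end{enumerate}
Consequently, each link of a chain in $\sS^n(X)(\A^1_F)$ witnessing an equality in $\sS^{n+1}(X)(F)$ lifts to an $\A^1$-ghost homotopy. Its endpoints agree in $\sS(X)(F)$ by Theorem \ref{quasi projective non negative} applied over $F$, and the junctions agree in $\sS(X)(F)$ by the inductive hypothesis. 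This gives bijectivity at stage $n+1$, and the rest of your argument then goes through.
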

\begin{remark}
If $X$ is a smooth, proper, non-uniruled $k$-surface, then $\sS(X) \cong \sS^2(X)$ \cite[Corollary 3.14]{bhs}, thus $\pi_0^{\A^1}(X) \cong \sS(X)$. Therefore, following Theorem \ref{quasi projective non negative}, it is natural to ask 
if $X\in Sm/k$ is a quasi-projective surface of non-negative logarithmic Kodaira dimension, then whether $\sS(X) \cong \sS^2(X)$.
\end{remark}

\begin{corollary}
If $X \in Sm/k$ is an $\mathbb{A}^1$-connected surface, where $k$ is an algebraically closed field of characteristic $0$. Then $X$ is rational
(we only use here $\pi_0^{\A^1}(X)(Spec \ k)$ is trivial, see also Theorem \ref{rational implies connected}).
\end{corollary}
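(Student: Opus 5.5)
We argue by contraposition on logarithmic Kodaira dimension, then reduce to the classical classification of open surfaces of negative logarithmic Kodaira dimension. The parenthetical remark tells us that only $\pi_0^{\A^1}(X)(Spec \ k) = \ast$ is to be used.

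The first step is to show that $X$ has negative logarithmic Kodaira dimension. Suppose instead that $\bar\kappa(X) \geq 0$. The Corollary following Theorem \ref{quasi projective non negative} then gives a bijection $\sS(X)(Spec \ k) \cong \pi_0^{\A^1}(X)(Spec \ k)$. Here $X$ is a smooth surface, hence quasi-projective, so that result applies. It follows that $\sS(X)(Spec \ k)$ is a singleton. Now take a general point $x \in X(k)$; it is joined to any other $k$-point by a chain of naive $\A^1$-homotopies $\A^1_k \to X$. Since $X(k)$ is dense and there are uncountably many (or at least infinitely many, enough to be non-contained in countably many curves after a standard argument) points, the images of non-constant affine lines through points of $X$ cover a dense set. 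A Hilbert-scheme/countability argument then produces a dominant family $\A^1_C \to X$. This is the same mechanism as in Theorem \ref{generic point ghost homotopy} and Theorem \ref{uniruledness generality}. Alternatively, and more cleanly, we can invoke Theorem \ref{uniruledness generality} directly, since $\A^1$-connectedness is the hypothesis. Either way $X$ is $\A^1$-uniruled, and hence $\bar\kappa(X) = -\infty$. This gives Corollary \ref{connected implies negative} immediately, so this step is essentially formal.

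The second step is to upgrade $\bar\kappa(X) = -\infty$ to rationality using the structure theory of open surfaces (Miyanishi--Sugie, Fujita, Kawamata). A smooth surface with $\bar\kappa = -\infty$ is $\A^1$-ruled: it contains a cylinder $\A^1 \times C^\circ$ for some smooth curve $C^\circ$. Therefore $X$ is birational to $\P^1 \times \bar C$, with $\bar C$ the smooth completion of $C^\circ$, and it suffices to show that $\bar C$ has genus $0$. Suppose $g(\bar C) > 0$. The $\A^1$-ruling then extends to a rational map $X \dashrightarrow \bar C$. Since every morphism $\A^1_U \to \bar C$ is constant, $\bar C$ is $\A^1$-rigid, and an argument in the spirit of Remark \ref{remarks on pi0A1}(2) should make the composite factor through $\pi_0^{\A^1}(X)$.

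The main obstacle is that the map $X \dashrightarrow \bar C$ is only rational, so the factorization is not automatic. I would first try to show that it is in fact a morphism. Any indeterminacy point would force $\bar C$ to receive non-constant rational curves from the exceptional fibres of a resolution, which is impossible for $g > 0$. Hence $X \to \bar C$ is a genuine morphism, and by the universal property it factors through $\pi_0^{\A^1}(X)$. Since $X \to \bar C$ is surjective onto an open subset of $\bar C$ with infinitely many $k$-points, $\pi_0^{\A^1}(X)(Spec \ k)$ has more than one element. This contradicts triviality, so $g(\bar C) = 0$ and $X$ is rational.
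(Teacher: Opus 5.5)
\textbf{The gap.} The step that fails as written is the claim in your second step that every smooth surface with $\bar\kappa = -\infty$ contains a cylinder $\A^1 \times C^\circ$. That is the Miyanishi--Sugie (Fujita) theorem for \emph{affine} surfaces. The corollary, however, concerns an arbitrary smooth surface, and for non-affine surfaces the claim is false. Take a finite non-cyclic group $G \subset SL_2(k)$, for example the quaternion group of order $8$. Then $(\A^2 \setminus \{0\})/G$ is smooth and has $\bar\kappa = \bar\kappa(\A^2 \setminus \{0\}) = -\infty$, being a finite \'etale quotient. Yet it contains no $\A^1$-cylinder; these are the Platonic $\A^1_*$-fibre spaces of Miyanishi--Tsunoda.

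\textbf{The fix.} You do not need cylinders. All you use is that $X$ is birational to $\P^1 \times \bar C$. This follows from $\kappa(\bar X) \le \bar\kappa(X) = -\infty$ for a smooth compactification $\bar X$, together with the Enriques--Castelnuovo classification. That is exactly how the paper proceeds: from $\A^1$-uniruledness of $X$ to uniruledness of $\bar X$, and then to a ruling $\bar X \to C$. With that substitution, your argument that the ruling is a genuine morphism when $g(\bar C) > 0$ is correct.

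\textbf{A smaller point.} The countability sketch at the start of your first step is invalid for countable $k$ such as $\bar{\mathbb{Q}}$, where $X(k)$ is itself contained in countably many curves. It is harmless only because you then fall back on Theorem \ref{uniruledness generality}. Note that this theorem, as in the paper, uses full $\A^1$-connectedness, not just triviality of $\pi_0^{\A^1}(X)(Spec \ k)$.

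\textbf{Comparison of the endgames.} Your endgame differs from the paper's and is arguably cleaner. The paper works with $C' = \pi(X)$ and the iterated sheaves. It uses that $\sS^n(X)(k) \to \sS^n(C')(k) \cong C'(k)$ is surjective, and that $\A^1$-connectedness gives some $n$ with $\sS^n(X)(k)$ trivial by \cite[Remark 6.22]{cb}. You instead observe that a curve of positive genus is an $\A^1$-rigid sheaf. Hence $X \to \bar C$ factors through $\pi_0^{\A^1}(X)$ by Remark \ref{remarks on pi0A1}(2), while the image of $X(k)$ in $\bar C(k)$ is infinite. This avoids the external input about a uniform $n$. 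It also makes transparent that this step uses only $\pi_0^{\A^1}(X)(Spec \ k) = \ast$, which is what the parenthetical in the statement is pointing to.
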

\begin{proof}
This is already known by using \cite[Proposition 5.4.2.7]{as} and Theorem \ref{rational implies connected}. We can also prove it using classification of surface and without using Theorem \ref{rationally connected}.

Since $X$ is $\A^1$-connected, so it is $\A^1$-uniruled.
Thus $\bar{X}$ is uniruled, where $\bar{X}$ is a smooth compactification of $X$. So $\bar{X}$ has negative logarithmic Kodaira dimension. Thus from the classification of projective surfaces, either $\bar{X}$ is rational or  
$\bar{X}$ admits a surjective morphism $\pi: \bar{X} \to C$ \cite[Theorem 6.1]{hart}, for some smooth proper curve $C$ such that the general fiber of $\pi$ is $\P^1$. 

If possible, $g(C)>0$. Then $C^\prime := \pi(X)$ is an $\A^1$-rigid curve. But $\pi|_X$ induces a surjective map $\sS^n(X)(k) \to \sS^n(C^\prime)(k)$, for every $n \geq 0$. Since $C^\prime$ is $\A^1$-rigid, so $\sS^n(C^\prime) \cong C^\prime$, for every $n$. The $\A^1$-connectivity of $X$ implies there is some $n$ such that $\sS^n(X)(k)$ is trivial \cite[Remark 6.22]{cb}. This implies that $\sS^n(C^\prime)(k) \cong C^\prime(k)$ is a singleton set. This is a contradiction. Hence $g(C)=0$ and hence $C \cong \P^1$. So $\bar{X}$ is birational to $\P^1 \times \P^1$ \cite[Remark 2.2.1, Chapter V]{hart}.  Hence $\bar{X}$ is rational.
\end{proof}
\begin{remark}
If $X \in Sm/k$ is an $\A^1$-connected, smooth, proper surface, then $\sS(X)(F)$ is trivial, for every finitely generated separable field extension $F/k$ (Theorem \ref{rational implies connected}).
Following \cite[Theorem 4.47]{aritra}, it can be shown that if $X$ is a smooth, affine surface, which is also $\A^1$-connected, then $\sS^2(X)(F)$ is trivial.  However, we do not know such an affine surface $X$ has also trivial
$\sS(X)(F)$ (see \cite[Remark 4.43]{aritra}). 
We also do not know whether an $\A^1$-connected, smooth, quasi-projective surface
$X$
always has trivial $\sS^2(X)(F)$.
\end{remark}
\begin{remark} \label{rationality comparisons}
\begin{enumerate}
\item Theorem \ref{uniruledness generality} says that $\A^1$-connected variety $X$ is $\A^1$-uniruled, that is $X$ admits a dominant morphism $H: \A^1_Y \to X$, for some $Y \in Sm/k$ with $dim(Y)=dim(X)-1$. However, the proof of the theorem does not say anyting about the rationality properties of $Y$. For example $Y$ is unirational, then $X$ would be unirational. 
The classical examples of unirational (in particular rationally connected), non-rational cubic hypersurfaces in $\P^4$ \cite{clemens} are not $\A^1$-connected \cite[Corollary 1.4]{schreieder},
(see also Remark \ref{rational connectedness remark}).
\item Suppose $X$ is a topologically contractible smooth variety over $\C$. If moreover $X$ is a surface, then $X$ is rational (\cite{rationalityI}, \cite{rationalityII}). So the compactification $\bar{X}$ is $\A^1$-connected (Theorem \ref{rational implies connected}). However, there are topologically contractible smooth surfaces over $\C$, which are not $\A^1$-connected \cite[Remark 5.3]{cb}, for example the Ramanujam surface \cite{ra}. It is natural to ask if $X$ is a smooth complex topologically contractible threefold, then whether $\bar{X}$ is $\A^1$-connected or rational (see also \cite[Question 8]{as}).
\end{enumerate}
\end{remark}

\

\section{$\mathbb{A}^1$-homotopy type of log uniruled surfaces and $\mathbb{A}^2$ minus origin}
Let $k$ be an algebraically closed field of characteristic $0$. In this section we describe the $\A^1$-homotopy types of $\A^1$-uniruled affine surfaces. 
The $\A^1$-connected varieties have negative logarithmic Kodaira dimension (Corollary \ref{connected implies negative}). Since the base field is of characteristic $0$, any $\A^2$-form is trivial \cite[Theorem 3]{kamb}. Thus using Miyanishi-Sugie algebraic characterisation of the affine plane \cite[Section 4.1]{miyanishi}, we can characterise the affine plane in $\A^1$-homotopy theory.
\begin{theorem} \cite[Theorem 5.1]{cb} \label{main criteria}
Suppose, $X \in Sm/k$ is an affine surface over an algebraically closed field of characteristic $0$ such that $Pic(X)$ is trivial. Then $X$ is isomorphic to $\A^2_k$ if and only if $X$ is $\A^1$-connected.
\end{theorem}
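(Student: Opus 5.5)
One direction is formal: $\A^2_k$ is $\A^1$-contractible, so it is $\A^1$-connected. For the converse, I will reduce to the Miyanishi--Sugie characterisation of the affine plane. That result says a smooth affine surface $X$ over an algebraically closed field of characteristic $0$ is isomorphic to $\A^2_k$ if and only if three conditions hold: $\bar{\kappa}(X)=-\infty$, $\mathcal{O}(X)^*=k^*$, and $Pic(X)=0$. (Alternatively one can use the form of the characterisation with $X$ factorial and the unit condition.) Triviality of $Pic(X)$ is a hypothesis, so two conditions remain to be checked for an $\A^1$-connected $X$.

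\textbf{Negativity of the logarithmic Kodaira dimension.} This is immediate from Corollary \ref{connected implies negative}. Behind it, Theorem \ref{uniruledness generality} gives a dominant morphism $\A^1_Y \to X$ with $\dim Y = 1$, so $X$ is $\A^1$-uniruled, and Iitaka's result then gives $\bar{\kappa}(X)=-\infty$.

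\textbf{Triviality of units.} A unit $u \in \mathcal{O}(X)^*$ is the same as a morphism $u: X \to \G_m$. Since $\G_m$ is an $\A^1$-rigid smooth scheme, $\pi_0^{\A^1}(\G_m)\cong \G_m$. By the universal property in Remark \ref{remarks on pi0A1}(2), $u$ factors through $\pi_0^{\A^1}(X)$, which is the point $Spec\ k$ because $X$ is $\A^1$-connected. Hence $u$ is constant, and $\mathcal{O}(X)^*=k^*$.

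\textbf{Conclusion and main obstacle.} With $\bar{\kappa}(X)=-\infty$, $\mathcal{O}(X)^*=k^*$ and $Pic(X)=0$, Miyanishi--Sugie gives $X\cong\A^2_k$. The hard step is applying Miyanishi--Sugie correctly. Its proof first produces an $\A^1$-fibration $X\to C$ onto a curve; the unit and Picard conditions then force $C\cong\A^1$ and every fibre to be reduced and irreducible. This exhibits $X$ as an $\A^1$-bundle over $\A^1$, hence an $\A^2$-form, which is trivial by Kambayashi \cite[Theorem 3]{kamb}. If I need to bypass citing the full characterisation, I would argue directly as follows. Theorem \ref{uniruledness generality} gives a dominant map from $\A^1_Y$; in the affine setting this yields an $\A^1$-fibration $X\to C$. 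The base $C$ must be $\A^1$-connected, since $\sS^n(X)(k)\to\sS^n(C)(k)$ is surjective, as in the argument of the preceding corollary. This rules out every curve except $\A^1$. Multiple or reducible fibres are then excluded using $Pic(X)=0$.
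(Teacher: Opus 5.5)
Your proof is correct and follows the route the paper indicates. $\A^1$-connectedness gives $\bar{\kappa}(X)=-\infty$ (Corollary \ref{connected implies negative}) and $\mathcal{O}(X)^*=k^*$ (via the $\A^1$-rigidity of $\G_m$), so with $Pic(X)=0$ the Miyanishi--Sugie characterisation yields $X\cong\A^2_k$.

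Two side remarks outside your main argument are slightly off. Kambayashi's theorem on $\A^2$-forms is what the paper uses to descend from $\bar{k}$ to a non-closed field (Theorem \ref{criteria char 0}); an $\A^1$-bundle over $\A^1$ is trivial for elementary reasons and needs no such theorem. In your optional bypass, the base $\P^1$ also passes the $\A^1$-connectedness test, so it must be excluded separately (e.g.\ using $Pic(X)=0$).
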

\begin{theorem}\cite[Theorem 1.1]{cb} \label{criteria char 0}
Let $k$ be a field of characteristic zero and $X \in Sm/k$ is an affine surface. Then $X$ is $\A^1$-contractible if and only if $X$ is isomorphic to $\A^2_k$.
\end{theorem}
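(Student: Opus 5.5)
The strategy is to reduce to Theorem \ref{main criteria} after passing to an algebraic closure, and then descend.

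One direction is immediate: $\A^2_k$ is $\A^1$-contractible, since the projection $\A^2_k \to Spec\ k$ is an $\A^1$-weak equivalence by construction of $\sH(k)$.

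For the converse, suppose $X$ is an $\A^1$-contractible smooth affine surface over $k$ of characteristic $0$. Let $\bar{k}$ be an algebraic closure. The base change functor $\sH(k) \to \sH(\bar{k})$, induced by pullback along the essentially smooth morphism $Spec\ \bar{k} \to Spec\ k$ (a filtered limit of finite separable extensions), preserves $\A^1$-weak equivalences. Hence $X_{\bar{k}}$ is $\A^1$-contractible over $\bar{k}$, and in particular $\A^1$-connected.

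To apply Theorem \ref{main criteria} we also need $Pic(X_{\bar{k}})$ to be trivial. For $U \in Sm/\bar{k}$, Morel--Voevodsky give
$$Pic(U) \cong Hom_{\sH(\bar{k})}(U, B\G_m),$$
since $B\G_m \simeq \P^\infty$ is $\A^1$-local ($\G_m$ is strictly $\A^1$-invariant). Contractibility gives
$$Hom_{\sH}(X_{\bar{k}}, B\G_m) \cong Hom_{\sH}(Spec\ \bar{k}, B\G_m) = Pic(\bar{k}) = 0.$$
Theorem \ref{main criteria} then yields $X_{\bar{k}} \cong \A^2_{\bar{k}}$.

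It remains to descend to $k$. By this point $X$ is a $k$-form of $\A^2$, and in characteristic zero every $\A^2$-form is trivial by Kambayashi's theorem \cite[Theorem 3]{kamb}. So $X \cong \A^2_k$.

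I expect the main obstacle to be the justification that $\A^1$-contractibility is stable under base change to $\bar{k}$ when $\bar{k}/k$ is infinite. The approach is to write $\bar{k}$ as a colimit of finite separable extensions $L/k$, each giving a smooth morphism $Spec\ L \to Spec\ k$. Pullback along a smooth morphism is a left Quillen functor that preserves $\A^1$-weak equivalences, and one then passes to the limit using continuity of $\sH$ for essentially smooth base change.

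A second, milder point is the identification of $Pic$ with maps into $B\G_m$ in $\sH$, for which I would cite \cite{mv}.

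Finally, one must check that Theorem \ref{main criteria} really needs only $\A^1$-connectedness over $\bar{k}$. Both $\A^1$-connectedness and triviality of $Pic$ follow from contractibility, so this holds.
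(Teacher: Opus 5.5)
Your proposal is correct and follows the same route as the paper, which cites \cite{cb}: base change to $\bar{k}$, deduce $\A^1$-connectedness and $Pic(X_{\bar{k}})=0$ from contractibility, apply Theorem \ref{main criteria}, and descend using Kambayashi's triviality of $\A^2$-forms. The step you flagged as the main obstacle is routine, and no continuity argument is needed: pullback along any morphism of base fields, finite or not, is already a left Quillen functor for the Morel--Voevodsky model structures, because it sends representables to representables, Nisnevich distinguished squares to distinguished squares, and projections $\A^1_U \to U$ to projections.
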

If the base scheme is a discrete valuation ring $R$ of equicharacteristic $0$, then using Sathaye's theorem \cite[Theorem 1]{sathaye}, we can characterise the affine plane over a discrete valuation ring.
\begin{corollary} \cite[Theorem 2.21]{choudhury2} \label{dvr}
Let $X$ be a smooth affine scheme over a discrete valuation ring $R$ of equicharacteristic $0$. Suppose, $X$ is of relative dimension $2$. Then $X$ is $\A^1$-contractible if and only if $X$ is isomorphic to $\A^2_R$. 
\end{corollary}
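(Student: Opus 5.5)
The plan is to reduce to Sathaye's theorem, which says that a smooth affine $R$-scheme $X$ of relative dimension $2$ over a DVR $R$ of equicharacteristic $0$ is isomorphic to $\A^2_R$ as soon as its generic fiber is $\A^2_K$ and its closed fiber is $\A^2_\kappa$. Here $K$ is the fraction field and $\kappa$ the residue field. The direction ``$X \cong \A^2_R$ implies $\A^1$-contractible'' is immediate, since the projection $\A^2_R \to Spec\, R$ is an $\A^1$-weak equivalence in $\sH(R)$. So the work is in the converse.

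Assume $X$ is $\A^1$-contractible over $R$. The first step is to pass to the fibers. Let $j: Spec\, K \to Spec\, R$ and $i: Spec\,\kappa \to Spec\, R$ be the inclusions. The pullbacks $j^*$ and $i^*$ on unstable motivic homotopy categories are left Quillen functors sending $\A^1$-weak equivalences between smooth schemes to $\A^1$-weak equivalences. Applying them to $X \to Spec\, R$ shows that $X_K \to Spec\, K$ and $X_\kappa \to Spec\,\kappa$ are $\A^1$-weak equivalences. Both fibers are smooth affine surfaces over fields of characteristic $0$, so Theorem \ref{criteria char 0} gives $X_K \cong \A^2_K$ and $X_\kappa \cong \A^2_\kappa$. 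Note that Theorem \ref{criteria char 0} is stated over an arbitrary field of characteristic zero, not only over algebraically closed ones. This matters, because $K$ and $\kappa$ need not be algebraically closed; it is here that one implicitly uses the triviality of $\A^2$-forms in characteristic $0$.

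The final step applies Sathaye's theorem \cite[Theorem 1]{sathaye}. Its hypotheses are that $R$ is a DVR containing $\mathbb{Q}$ and that $X = Spec\, A$ is a finitely generated flat $R$-algebra whose generic and closed fibers are affine planes. Flatness is automatic because $X$ is smooth over $R$, and equicharacteristic $0$ supplies $\mathbb{Q} \subseteq R$. Sathaye's theorem then gives $A \cong R[x,y]$, that is $X \cong \A^2_R$.

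The step I expect to need the most care is the base-change claim: that $\A^1$-contractibility over $Spec\, R$, a non-field base, restricts to $\A^1$-contractibility of the fibers over $K$ and $\kappa$. I would justify it through the standard fact that for any morphism of base schemes $f$, the functor $f^*$ preserves $\A^1$-weak equivalences, because it is the left derived functor of a left Quillen functor on the projective model structures and sends representables to representables. For the open immersion $j$, and more generally for essentially smooth morphisms, one can alternatively use the restriction to $Sm/K$ as a pro-smooth base change. If that formalism is unavailable, I would fall back on checking contractibility through the $\A^1$-chain connected components $\pi_0$, together with the Pic and unit conditions needed in the proof of Theorem \ref{criteria char 0}. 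This is weaker but may suffice for the surface characterization.
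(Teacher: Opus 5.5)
Your proposal is correct and follows the same route the paper indicates via \cite[Theorem 2.21]{choudhury2}: you base change the $\A^1$-contractibility of $X$ to the generic fiber over $K$ and the closed fiber over $\kappa$, apply Theorem \ref{criteria char 0} over these characteristic-zero fields (this is where triviality of $\A^2$-forms enters), and conclude with Sathaye's theorem \cite[Theorem 1]{sathaye}. The base-change step you flag is standard, since $f^*$ is left Quillen, preserves representables and sends $\A^1$-weak equivalences between smooth schemes to $\A^1$-weak equivalences, so the fallback in your last paragraph is not needed.
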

\begin{remark}
\begin{enumerate}
\item If the base field is $\C$, then $\A^1$-contractibility is strictly a stronger notion than the topological contractibility \cite[Corollary 1.3]{cb}. The Ramanujam surface \cite[Section 3]{ra}, the tom-Dieck Petrie surfaces \cite[Theorem A]{dp} are topologically contractible, but not $\A^1$-contractible. Actually they are not even $\A^1$-connected, since they have non-negative logarithmic Kodaira dimension. 

If a variety $X$ is $\A^1$-contractible, then its Voevodsky motive $M(X)$ in $\textbf{DM}_{gm}(\C, \Z)$ is isomorphic to $M(Spec \ \C)$ or $X$ is motivically contractible. A topologically contractible surface over $\C$ is motivically contractible
\cite{asokmotive}.
However, the Ramanujam surface is motivically contractible, but it is not $\A^1$-contractible. It is a natural question whether a topologically contractible threefold $X$ is motivically contractible, 
even under the assumption of $\A^1$-connectedness. One can also ask whether a topologically contractible, smooth, affine threefold, which is also $\A^1$-connected, is even $\A^1$-contractible. It is known that if a topologically contractible threefold $X$ admits a smooth projective compactification $\bar{X}$ such that $\bar{X}$ is rationally connected (in particular $\A^1$-connected), then the Chow groups $CH^i(X)$ are trivial, for $i=1,2,3$ \cite[Theorem 4.0.2]{liu}.
\item In higher dimensions, there are $\A^1$-contractible varieties but it is not isomorphic to the affine space. For example, the Koras-Russell threefold of the first kind is $\A^1$-contractible \cite[Theorem 1.1]{df}, but it not isomorphic to the affine $3$-space (using non-triviality of Makar-Limanov invariant \cite[Theorem 9.9]{fr}). 
Asok-Doran constructed $\A^1$-contractible varieties of dimension $n \geq 4$, which are strictly quasi-affine and they are quotients of affine spaces by $\G_a$-actions \cite[Theorem 5.1, Theorem 5.3]{asok unipotent}. In \cite{higher dimension}, Dubouloz-Ghosh constructed a family of $\A^1$-contractible affine varieties of dimension $n \geq 4$, which are not isomorphic to the affine $n$-space.
\end{enumerate}
\end{remark}
Using the characterisation of the affine plane over the field of characteristic zero, we can describe the following result on the $\A^1$-homotopy type of the codimension $2$ closed complement in the affine plane.
\begin{theorem} \label{codimension 2 complement}
Let $k$ be any field of characteristic $0$. Suppose, $X \in Sm/k$ is an affine surface and $U \subseteq X$ is an open subscheme. Then $U$ is $\A^1$-weakly equivalent to $\A^2_k \setminus \{n \text{-many }k\text{-points }\}$ if and only if $U$ is isomorphic to $\A^2_k \setminus \{n \text{-many }k \text{-points}\}$.
\end{theorem}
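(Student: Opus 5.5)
One direction is immediate: an isomorphism is in particular an $\A^1$-weak equivalence. For the converse, suppose $U \subseteq X$ is open, with $X$ a smooth affine surface, and $U$ is $\A^1$-weakly equivalent to $\A^2_k \setminus \{p_1,\dots,p_n\}$. The plan is to reconstruct an affine surface $X'$ containing $U$ with $X' \setminus U$ finite, show $X' \cong \A^2_k$ by Theorem \ref{criteria char 0}, and then count the removed points.

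\textbf{Step 1: replace $X$ by a canonical affine hull.} Let $Z = X \setminus U$. Split $Z$ into its divisorial part $D$ and a finite set $F$. Then $X \setminus D$ is still affine, since the complement of a Cartier divisor in an affine scheme is affine, and $U = (X\setminus D)\setminus F$. Replacing $X$ by $X \setminus D$, we may assume $X \setminus U$ is a finite set of closed points.

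By Hartogs, $\mathcal{O}(U) = \mathcal{O}(X)$. Hence $X = \operatorname{Spec} \mathcal{O}(U)$ is intrinsic to $U$. In particular $\mathcal{O}(U)$ is finitely generated, and $U \hookrightarrow \operatorname{Spec}\mathcal{O}(U)$ has finite complement. The same facts hold for $V := \A^2_k \setminus \{p_i\}$, where $\operatorname{Spec}\mathcal{O}(V) = \A^2_k$.

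\textbf{Step 2: transfer $\A^1$-invariants from $U$ to $X$.} We want $X$ to be $\A^1$-contractible, or at least $\A^1$-connected with trivial Picard group, and then apply Theorem \ref{main criteria} or Theorem \ref{criteria char 0}.

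For the Picard group: removing points of codimension $2$ does not change $Pic$, so $Pic(X)=Pic(U)$. Since $Pic$ is $\A^1$-representable, $Pic(U)\cong Pic(V)=Pic(\A^2)=0$.

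For connectedness: $\pi_0^{\A^1}(U) \cong \pi_0^{\A^1}(V)$, and $V$ is $\A^1$-connected because $V \cong S^{3,2}$ when $n=1$, and $V$ is $\A^1$-chain connected for any $n$. The map $U \to X$ then gives an epimorphism on $\pi_0^{\A^1}$. Indeed, any $F$-point of $X$ can be moved off the finite set $F$ by a line, or we can argue via sections over fields using unramifiedness. So $X$ is $\A^1$-connected. Over $\bar k$, Theorem \ref{main criteria} gives $X_{\bar k} \cong \A^2$.

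Over a general field of characteristic $0$, we want $\A^1$-contractibility of $X$ so that Theorem \ref{criteria char 0} applies. For this I would use a purity/Gysin cofiber sequence
\[
U \to X \to \bigvee_{x\in X\setminus U} Th(N_x)
\]
and compare it with the same sequence for $V \subset \A^2$. Alternatively, one can use that an $\A^2$-form in characteristic zero is trivial \cite[Theorem 3]{kamb}.

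\textbf{Step 3: identify the points.} Once $X \cong \A^2_k$, we have $U = \A^2_k \setminus F$ with $F$ a finite set of closed points. It remains to show that $F$ consists of exactly $n$ rational points. Compare invariants computed by the cofiber sequence: $\A^2\setminus F \simeq$ a wedge indexed by $F$ of $S^{3,2}_{\kappa(x)}$-type spheres. The relevant invariant is $\tilde{H}^{3,2}$ motivic cohomology, or $K_0$-type data, which is $\bigoplus_{x\in F} \mathbb{Z}$ twisted by the residue fields. For example, reduced motivic cohomology in bidegree $(3,2)$ of $\A^2\setminus F$ is $\bigoplus_x CH^0(\kappa(x))$ viewed via the Gysin map. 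The more precise invariant is the Chow motive, $M(\A^2\setminus F)=\mathbb{Z}\oplus\bigoplus_x M(\kappa(x))(2)[3]$. Motives of the Artin points $\operatorname{Spec}\kappa(x)$ detect the étale algebras, so $\coprod \operatorname{Spec}\kappa(x)\cong \coprod_n \operatorname{Spec} k$. Hence $F$ consists of $n$ $k$-points.

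\textbf{Main obstacle.} I expect the hardest part to be Step 2 over a non-closed field: deducing $\A^1$-contractibility (or $\A^1$-connectedness plus the right form statement) of $X$ from information about $U$. My first attempt would be to pass to $\bar k$, get $X_{\bar k}\cong\A^2$ from Theorem \ref{main criteria}, and then descend using triviality of $\A^2$-forms in characteristic $0$.
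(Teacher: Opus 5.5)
\paragraph{Gap in Step 2.} Your overall plan is sound: pass to the affine hull $X=\mathrm{Spec}\,\mathcal{O}(U)$ with finite complement, recognise $X\cong\A^2_k$, then count the removed points. It matches the paper's framing of the result via the characterisation of $\A^2$, and Step 1 is fine. The gap is the sentence asserting that $U\hookrightarrow X$ gives an epimorphism $\pi_0^{\A^1}(U)\to\pi_0^{\A^1}(X)$. That is what you need to make $X$ $\A^1$-connected and invoke Theorem \ref{main criteria}, and neither justification you offer works.

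\begin{itemize}
\item You have no a priori supply of non-constant lines $\A^1_F\to X$ through $F$-points lying over $X\setminus U$. That every point of $X$ lies on an affine line is essentially the conclusion you are after; you only know it once $X\cong\A^2$.
\item $\pi_0^{\A^1}$ is not known to be unramified, so you cannot specialise triviality at the generic point to the finitely many bad points. Remark \ref{remarks on pi0A1}(3) exhibits a smooth affine surface for which it is not unramified.
\end{itemize}

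The general principle behind your sentence is false. Take $X=\G_m\times\G_m$ and $U=X\setminus\{(1,1)\}$. Both are $\A^1$-rigid, so $\pi_0^{\A^1}(U)=U\to X=\pi_0^{\A^1}(X)$. This is not a Nisnevich epimorphism, since the Henselisation of $X$ at $(1,1)$ does not factor through $U$. Yet purity gives exactly the same picture here: the cofiber $X/U$ is a highly $\A^1$-connected Thom space. That controls $\A^1$-homology, e.g.\ $H_0^{\A^1}(U)\cong H_0^{\A^1}(X)$, but not the sheaf $\pi_0^{\A^1}$. So the hypothesis of Theorem \ref{main criteria} is not established. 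The ``main obstacle'' you flag, descent from $\bar k$ to $k$, is actually the easy part: Kambayashi handles it.

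\paragraph{How to repair it.} Bypass $\A^1$-connectedness of $X$ and feed the invariants directly into the Miyanishi--Sugie--Fujita characterisation, on which Theorem \ref{main criteria} rests. That characterisation says: a smooth affine surface over an algebraically closed field of characteristic $0$ is $\A^2$ if it has factorial coordinate ring, only constant units, and log Kodaira dimension $-\infty$. After base change to $\bar k$ (the weak equivalence $U\simeq V$ base changes), all three properties transfer from $U$ to $X$:
\begin{itemize}
\item $Pic(X)=Pic(U)=0$, as you argue.
\item $\mathcal{O}(X)^*=\mathcal{O}(U)^*=Hom_{\sH(\bar k)}(U,\G_m)\cong Hom_{\sH(\bar k)}(V,\G_m)=\bar k^*$, by Hartogs and $\A^1$-rigidity of $\G_m$.
\item Since $U$ is $\A^1$-connected, Theorem \ref{uniruledness generality} gives a dominant $\A^1_Y\to U$. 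This map is also dominant into $X$, so $X$ has log Kodaira dimension $-\infty$ (Iitaka, as in Corollary \ref{connected implies negative}).
\end{itemize}
Hence $X_{\bar k}\cong\A^2$, and then $X\cong\A^2_k$ by Kambayashi. $\A^1$-connectedness of $X$ follows a posteriori.

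\paragraph{A correction in Step 3.} Drop the claim that integral Artin motives detect \'etale algebras. Integrally isomorphic permutation modules need not come from isomorphic Galois sets, so it fails in general. You only need a special case, which is easy:
\begin{itemize}
\item Motivic cohomology is $\A^1$-representable.
\item By the Gysin sequence, $H^{3,2}(\A^2\setminus F)\cong\bigoplus_{x\in F}H^{0,0}(\kappa(x))=\Z^{\#F}$.
\item Comparing over $k$ and over $\bar k$ gives $\#F=n=\sum_{x}[\kappa(x):k]$, which forces every removed point to be $k$-rational.
\end{itemize}
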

\begin{proof}
\cite[Theorem 3.1]{choudhury2}. 
\end{proof}
\begin{remark}
The assumption in Theorem \ref{codimension 2 complement} that $U$ is open in an affine scheme $X$ has been used in the proof. Similar statement to Theorem \ref{codimension 2 complement} in case of quasi-projective surface is not known.
\end{remark}
In higher dimensions, we have the following result.
\begin{theorem} \cite[Theorem 3.4]{choudhury2} \label{dimension 3 poincare}
Suppose, $X$ is the Koras-Russell threefold of the first kind given by
$$X \equiv \{x^mz=y^r+t^s+x\} \subseteq \A^4_k$$ 
and $p=(1,0,1,0) \in X(k)$. Then the quasi-affine threefold $X \setminus \{p\}$ is $\A^1$-weakly equivalent to $\A^3_k \setminus \{(0,0,0)\}$, but $X \setminus \{p\}$ is not isomorphic to $\A^3_k \setminus \{(0,0,0)\}$, as $k$-varieties.
\end{theorem}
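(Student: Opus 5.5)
Two things must be shown: that $X\setminus\{p\}$ is $\A^1$-weakly equivalent to $\A^3_k\setminus\{0\}$, and that the two are not isomorphic as $k$-varieties.

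\textbf{Weak equivalence.} I would use homotopy purity (the Morel--Voevodsky purity theorem). For a smooth scheme $Y$ and a $k$-point $q\in Y(k)$ there is a cofiber sequence in $\sH_{\bullet}(k)$
$$Y\setminus\{q\}\to Y\to Th(N_{q}Y)\cong \P^3_k/\P^2_k\cong S^{6,3}.$$
This holds because the normal bundle of a rational point is trivial of rank $3$.

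\begin{enumerate}
\item Apply this to $Y=X$ with $q=p$, and to $Y=\A^3_k$ with $q=0$.
\item By \cite[Theorem 1.1]{df}, $X$ is $\A^1$-contractible. Hence in both cases the middle term is contractible, and each punctured space is identified with the homotopy fiber, i.e.\ with $\Omega$ of the Thom space. This is not yet enough, because looping does not determine a space.
\item Instead, build an explicit comparison map. Choose an $\A^1$-weak equivalence $X\to \A^3_k$, or a zig-zag of them, that sends $p$ to $0$. The natural candidate is the Dubouloz--Fasel construction: $X$ is an affine modification of $\A^3_k$, and the known contraction comes from an $\A^1$-fibration structure.
\item Compare the two purity cofiber sequences. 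Compatibility of purity with étale maps near the point shows that the induced map on Thom spaces is an equivalence.
\item Conclude with a five-lemma style argument for homotopy cofiber sequences.
\end{enumerate}

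The five-lemma step is delicate, since cofiber sequences do not give long exact sequences unstably. I would work around it with a pushout description. Write $X=(X\setminus\{p\})\cup_{V\setminus\{p\}} V$, where $V$ is a Nisnevich neighbourhood of $p$ that is étale over $\A^3_k$. Then $X\setminus\{p\}$ is the homotopy fiber of $X\to X/(X\setminus\{p\})$. Because $X$ is contractible, $X\setminus\{p\}$ is equivalent to $\Omega^1_s$ of a certain space only after suspension, so suspension alone will not suffice.

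A cleaner route is the following. Both spaces are $\A^1$-$1$-connected, since $S^{5,3}$ is $1$-connected. Compute the $\A^1$-homology, or the stable type, using the cofiber sequences above, and then apply the $\A^1$-Whitehead and Hurewicz theorems to a map realizing the identification. The map itself would be $X\setminus\{p\}\to \A^3\setminus\{0\}$, defined when the modification morphism restricts suitably near $p$. For the explicit point $p=(1,0,1,0)$, which lies off the exceptional locus $x=0$, I would check that the birational morphism $X\to\A^3$, $(x,y,z,t)\mapsto(x,y,t)$, is étale at $p$ and that the fiber over its image is exactly $p$ away from $\{x=0\}$. This is the step I expect to be the main obstacle: verifying that the map of punctured spaces is an $\A^1$-equivalence, which requires an excision comparison over the locus $x=0$ where the modification happens.

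\textbf{Non-isomorphism.} I would use the Makar-Limanov invariant. Since $X$ is normal and $\{p\}$ has codimension $3$, every regular function on $X\setminus\{p\}$ extends, so $\mathcal{O}(X\setminus\{p\})=\mathcal{O}(X)$; likewise $\mathcal{O}(\A^3\setminus\{0\})=k[x,y,t]$. An isomorphism $X\setminus\{p\}\cong\A^3_k\setminus\{0\}$ would therefore induce $\mathcal{O}(X)\cong k[x,y,t]$. Both $X$ and $\A^3_k$ are affine, so this would give $X\cong\A^3_k$. That contradicts the non-triviality of the Makar-Limanov invariant of $X$ \cite[Theorem 9.9]{fr}.
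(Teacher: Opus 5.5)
Your non-isomorphism argument is correct. Hartogs on the smooth threefold $X$ gives $\mathcal{O}(X\setminus\{p\})=\mathcal{O}(X)$, so an isomorphism with $\A^3_k\setminus\{0\}$ would force $X\cong\A^3_k$, which contradicts the Makar-Limanov invariant.

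The weak-equivalence half has a genuine gap, and it is not where you locate it. First, your step 2 and the pushout paragraph misstate what contractibility gives. The cofiber sequence $X\setminus\{p\}\to X\to X/(X\setminus\{p\})$ identifies the Thom space with $\Sigma_s(X\setminus\{p\})$. It does not exhibit $X\setminus\{p\}$ as a loop space, because unstably cofiber sequences are not fiber sequences. What your comparison does prove is the following. Take $g(x,y,z,t)=(x-1,y,t)$; the translation is needed, since $(x,y,t)$ sends $p$ to $(1,0,0)$. Then $g^{-1}(0)=\{p\}$, and $g$ maps the open set $\{x\neq 0\}\ni p$ isomorphically onto an open subset of $\A^3_k$, so Zariski excision identifies the two Thom spaces. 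Also $g$ is an $\A^1$-weak equivalence simply because both sides are $\A^1$-contractible. Hence $\Sigma_s(g|_{X\setminus\{p\}})$ is an $\A^1$-weak equivalence, and $g|_{X\setminus\{p\}}\colon X\setminus\{p\}\to\A^3_k\setminus\{0\}$ induces isomorphisms on $\A^1$-homology. The locus $\{x=0\}$ never enters, so no excision over it is needed.

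The real difficulty is desuspension. Morel's $\A^1$-Whitehead theorem upgrades an $\A^1$-homology isomorphism to a weak equivalence only between $\A^1$-simply connected spaces. Your reason for $X\setminus\{p\}$ being $\A^1$-$1$-connected (``since $S^{5,3}$ is $1$-connected'') is circular. It applies to $\A^3_k\setminus\{0\}\simeq S^2_s\wedge\G_m^{\wedge 3}$, but for $X\setminus\{p\}$ it presupposes the equivalence you are proving. Nor can it be read off from what you have. A homology isomorphism, or an equivalence after $\Sigma_s$, sees $\pi_1^{\A^1}$ only through its $\A^1$-abelianization (think of homology spheres), and it does not control $\pi_0^{\A^1}$.

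You need independent geometric input about deleting a codimension-$3$ point from the $\A^1$-contractible smooth $X$. Concretely, you need two things:
\begin{enumerate}
\item an excision/connectivity theorem of Asok--Doran type giving $\pi_1^{\A^1}(X\setminus\{p\})\cong\pi_1^{\A^1}(X)=0$;
\item a proof that $X\setminus\{p\}$ is $\A^1$-connected.
\end{enumerate}
The second is not automatic: whether $\A^1$-connectedness survives removing a codimension $\geq 2$ subset is open in general, cf.\ \cite[Conjecture 2.18]{asokdoran}. For this explicit $X$ it can be checked directly, by joining $F$-points of $X\setminus\{p\}$ through chains of affine lines that avoid $p$. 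Once $X\setminus\{p\}$ is known to be $\A^1$-simply connected, your route (excision near $p$, suspension, Whitehead) goes through.
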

\begin{remark} 
Suppose, $X$ is the generalised Koras-Russell variety given by
$$X \equiv \{x_0^mx_1^m\dots x_n^mz=y^r+t^s+x_0\} \subseteq \A^{n+4}_k, \ n \geq 0$$ 
and $p=(1,1, \dots, 1,0,1,0) \in X(k)$. Then the quasi-affine variety $X \setminus \{p\}$ is $\A^1$-weakly equivalent to $\A^{n+3}_k \setminus \{(0,\dots,0)\}$, but $X \setminus \{p\}$ is not isomorphic to $\A^{n+3}_k \setminus \{(0,\dots,0)\}$, as $k$-varieties (\cite[Theorem 6.8]{krishnakumar2}).

\end{remark}
\bibliographystyle{alpha}

\end{document}